\author     [K. Shimizu]{Kenichi Shimizu}
\address    [K. Shimizu]{Institute of Mathematics, University of Tsukuba. Tsukuba, Ibaraki, 305-8571, Japan.}
\title      [Frobenius-Schur indicators in Tambara-Yamagami categories]
            {Frobenius-Schur indicators \\ in Tambara-Yamagami categories}
\date       {May 24, 2010}
\newtheorem{counter}            {}[section]
\theoremstyle{definition}
\newtheorem{definition}         [counter]{Definition}
\theoremstyle{plain}
\newtheorem{lemma}              [counter]{Lemma}
\newtheorem{proposition}        [counter]{Proposition}
\newtheorem{theorem}            [counter]{Theorem}
\newtheorem{corollary}          [counter]{Corollary}
\theoremstyle{remark}
\newtheorem{remark}             [counter]{Remark}
\newcommand{\sgn}       {\mathop{\rm sgn}\nolimits}
\newcommand{\id}        {\mathop{\rm id}\nolimits}
\newcommand{\Hom}       {\mathop{\rm Hom}\nolimits}
\newcommand{\End}       {\mathop{\rm End}\nolimits}
\newcommand{\Ker}       {\mathop{\rm Ker}\nolimits}
\newcommand{\Rad}       {\mathop{\rm Rad}\nolimits}
\newcommand{\Aut}       {\mathop{\rm Aut}\nolimits}
\newcommand{\Irr}       {\mathop{\rm Irr}\nolimits}
\newcommand{\Rep}       {\mathop{\rm Rep}\nolimits}
\newcommand{\Trace}     {\mathop{\rm Tr}\nolimits}
\newcommand{\ptrace}    {\mathop{\rm ptr}\nolimits}
\newcommand{\pdim}      {\mathop{\rm pdim}\nolimits}
\newcommand{\FPdim}     {\mathop{\rm FPdim}\nolimits}
\newcommand{\legendre}  [2]{{\genfrac{(}{)}{1pt}{}{#1}{#2}}}
\newcommand{\TY}        {\mathcal{TY}}
\newcommand{\eval}      {b}
\newcommand{\coev}      {d}
\newenvironment{enumalph}
{\begin{enumerate}\renewcommand{\labelenumi}{\textnormal{\hbox to 1.4em{(\hfil\alph{enumi}\hfil)}}}}
  {\end{enumerate}}
\begin{document}

\begin{abstract}
  We introduce formulae of Frobenius-Schur indicators of simple objects of Tambara-Yamagami categories. By using techniques of the Fourier transform on finite abelian groups, we study some arithmetic properties of indicators.
\end{abstract}

\maketitle

\section{Introduction}
\label{sec:introduction}

At first Frobenius-Schur indicators were introduced in the group theory to determine whether a given irreducible representation admits a symmetric or a skew-symmetric invariant bilinear form. After that the theory of indicators were generalized to semisimple Hopf algebras by Linchenko and Montgomery \cite{MR1808131} and to semisimple quasi-Hopf algebras by Mason and Ng \cite{MR2104908}.

Since representations of a (quasi-)Hopf algebra form a monoidal category, it is natural to consider any categorification of the theory of Frobenius-Schur indicators. Ng and Schauenburg developed the theory of Frobenius-Schur indicators in linear pivotal categories, which are linear monoidal categories equipped with certain additional structures, see \cite{MR2313527} and \cite{MR2381536}. They defined the $n$-th Frobenius-Schur indicator $\nu_n(V)$ of an object $V$ of such categories to be the trace of certain linear automorphism on $\Hom(1, V^{\otimes n})$, where $1$ is the neutral object.

Now, it is interesting to obtain formulae for indicators of objects of categories that are not coming from Hopf algebras. In this paper, we introduce closed formulae for indicators of objects of Tambara-Yamagami categories \cite{MR1659954}, which are a well-studied class of semisimple linear pivotal categories. We also study properties of indicators and give some applications of our formulae.

This paper is organized as follows. In Section~\ref{sec:preliminaries}, we recall definitions and results from the theory of monoidal categories and Frobenius-Schur indicators. We also introduce techniques of the Fourier transform, which will be important tools for analyzing arithmetic properties of indicators.

In Section~\ref{sec:indicators}, we recall the definition of Tambara-Yamagami categories (Definition~\ref{def:TY}) and determine all Frobenius-Schur indicators of all simple objects. Our formulae of indicators (Theorem~\ref{thm:fs-ind-1} and~\ref{thm:fs-ind-ty-2}) seem to be of complicated forms, however, by using techniques of the Fourier transform, it turns out that values of indicators are very familiar form, see Theorem~\ref{thm:fs-ind-arith}.

For a pivotal fusion category $\mathcal{C}$, the author \cite{S10} has introduced the symbol $\nu_n(\mathcal{C})$ as a certain weighted sum of $n$-th Frobenius-Schur indicators of simple objects of $\mathcal{C}$. In Section~\ref{sec:indicator-sum}, we study $\nu_n(\mathcal{C})$ for Tambara-Yamagami categories $\mathcal{C}$. We also argue and give some partial results on Frobenius theorem for $\mathcal{C}$ (see Definition~\ref{def:frobenius}), which is motivated by the classical theorem of Frobenius.

Kashina, Montgomery and Ng in \cite{KMN09} mentioned whether there exists a semisimple Hopf algebra such that the trace of its antipode is zero. In Section~\ref{sec:traceless}, as an application of our result, we give such examples. Note that, if $\mathcal{C}$ is the category of representations of a semisimple Hopf algebra $H$, the above $\nu_2(\mathcal{C})$ is equal to the trace of the antipode. By using our results, we can easily get fusion categories $\mathcal{C}$ with $\nu_2(\mathcal{C}) = 0$. Combining this observation with Tambara's result \cite{MR1776075} on fiber functors on Tambara-Yamagami categories, we obtain semisimple Hopf algebras with the traceless antipode.

In Section~\ref{sec:computation}, as computational examples, we determine Frobenius-Schur indicators of objects of Tambara-Yamagami categories associated with finite vector spaces. They are expressed explicitly by using several Legendre symbols. We see that all such categories can be distinguished by using Frobenius-Schur indicators.

\section{Preliminaries}
\label{sec:preliminaries}

Throughout this paper, we work over the field $\mathbb{C}$ of complex numbers. Every linear category is assumed to be with finite-dimensional $\Hom$-spaces (over $\mathbb{C}$). Unless otherwise noted, functors between $\mathbb{C}$-linear categories are always assumed to be $\mathbb{C}$-linear. We denote by $\mu_n$ the set of $n$-th roots of unity in $\mathbb{C}$. The set of all roots of unity is denoted by $\mu_\infty$ in $\mathbb{C}$.

All (quasi-)Hopf algebras are assumed to be finite-dimensional over $\mathbb{C}$. We denote by $\Rep(H)$ the $\mathbb{C}$-linhear monoidal category of finite-dimensional representations of a quasi-Hopf algebra $H$.

\subsection{Fusion categories}

We will freely use the basic theory of monoidal categories. The reader may refer to \cite{MR1797619}, \cite{MR2183279}, \cite{MR1321145} and \cite{MR1966525} for related topics. However, for reader's convenience, we recall some definitions and facts in this subsection.

First we fix some conventions. Let $\mathcal{C}$ be a monoidal category. The associativity constraint is denoted by $\Phi_{X,Y,Z}: (X \otimes Y) \otimes Z \to X \otimes (Y \otimes Z)$. The left dual object of $V \in \mathcal{C}$ is denoted by $V^*$ if it exists. The evaluation and the coevaluation are usually denoted by $\coev_V: 1 \to V \otimes V^*$ and $\eval_V: V^* \otimes V \to 1$, where $1$ denotes the unit object of $\mathcal{C}$.

Suppose that $\mathcal{C}$ is left rigid, that is, every object of $\mathcal{C}$ has a left dual. Then the assignment $V \mapsto V^{**}$ gives rise to a monoidal functor $(-)^{**}: \mathcal{C} \to \mathcal{C}$. A {\em pivotal structure} on $\mathcal{C}$ is an automorphism $j: \id_{\mathcal{C}} \to (-)^{**}$ of monoidal functors. A {\em pivotal category} is a rigid monoidal category equipped with a pivotal structure. Given such a $j$, the (right) {\em pivotal trace} of $f: V \to V$ in $\mathcal{C}$ is defined and denoted by
\begin{equation*}
  \ptrace_j(f) = \eval_{V^*}^{} (j_V f \otimes \id_V) \coev_V^{}: 1 \to 1.
\end{equation*}
We call $\pdim_j(V) = \ptrace_j(\id_V)$ the (right) {\em pivotal dimension} of $V \in \mathcal{C}$. We will often omit the subscript $j$ when it is obvious. A pivotal structure $j$ is {\em spherical} \cite{MR1686423} if $\ptrace_j(f) = \ptrace_j(f^*)$ for every $f$. A {\em spherical category} is a rigid monoidal category equipped with a spherical pivotal structure.

Let $\Irr(\mathcal{C})$ denotes the set of representatives of isomorphism classes of an (skeletally small) abelian category $\mathcal{C}$. A {\em fusion category} is a $\mathbb{C}$-linear semisimple abelian rigid monoidal category $\mathcal{C}$ with finite $\Irr(\mathcal{C})$ such that the unit object $1 \in \mathcal{C}$ is simple and $\End(V) \cong \mathbb{C}$ for every $V \in \Irr(\mathcal{C})$. Until the end of this subsection, we assume $\mathcal{C}$ to be a fusion category. Then, a pivotal structure $j$ on $\mathcal{C}$ is spherical if and only if $\pdim_j(V) \in \mathbb{R}$ for every $V \in \Irr(\mathcal{C})$.

The {\em Frobenius-Perron dimension} of $V \in \mathcal{C}$, denoted by $\FPdim(V)$, is the largest real eigenvalue of the left multiplication of $V$ on the Grothendieck ring $K(\mathcal{C})$. A {\em canonical pivotal structure} is a pivotal structure $j$ on $\mathcal{C}$ such that $\pdim_j(V) = \FPdim(V)$ for every $V \in \mathcal{C}$. Such a structure exists if and only if $\mathcal{C}$ is pseudo-unitary in the sense of \cite{MR2183279}.

We denote by $\mathcal{Z}(\mathcal{C})$ the left Drinfeld center of $\mathcal{C}$ (which has as objects the pairs $(V, e_V)$ of an object $V \in \mathcal{C}$ and a left half-braiding $e_V: V \otimes (-) \to (-) \otimes V$). Also $\mathcal{Z}(\mathcal{C})$ is a fusion category, under our assumption that $\mathcal{C}$ is a fusion category. The assignment $(V, e_V) \mapsto V$ extends to a monoidal functor $\mathcal{Z}(\mathcal{C}) \to \mathcal{C}$. This functor has a two-sided adjoint $I: \mathcal{C} \to \mathcal{Z}(\mathcal{C})$ such that
\begin{equation}
  \label{eq:adjoint}
  I(V) \cong \bigoplus_{(X, e_X) \in \Irr(\mathcal{Z}(\mathcal{C}))} (X, e_X)^{\oplus \dim \Hom(V,X)}
  \quad (V \in \mathcal{C}).
\end{equation}

Given a pivotal structure $j$ on $\mathcal{C}$, one can define a pivotal structure $J$ on $\mathcal{Z}(\mathcal{C})$ so that $\pdim_J((V, e_V)) = \pdim_j(V)$. The induced structure $J$ is spherical if and only if so is $j$. Note that there exists a bijection between twists (in the sense of \cite[Definition~XIV.3.2]{MR1321145}) of a braided fusion category and spherical pivotal structures on it. Therefore $\mathcal{Z}(\mathcal{C})$ is naturally a ribbon category if $\mathcal{C}$ is a spherical fusion category.

\subsection{Frobenius-Schur indicators}

Let $\mathcal{C}$ be a rigid monoidal category. For an object $V \in \mathcal{C}$, define $V^{\otimes n} \in \mathcal{C}$ inductively by $V^{\otimes 0} = 1$, $V^{\otimes 1} = V$ and $V^{\otimes n} = V \otimes V^{\otimes (n - 1)}$ ($n \ge 2$). It is well-known that there exist isomorphisms
\begin{align*}
  A^X_{Y, Z}   : \Hom(X, Y \otimes Z) & \to \Hom(Y^* \otimes X, Z) \quad \text{and} \\
  B^{X, Y}_{Z} : \Hom(X \otimes Y, Z) & \to \Hom(X, Z \otimes Y^*)
\end{align*}
that are natural in $X, Y, Z \in \mathcal{C}$ \cite[XIV.2.2]{MR1321145}. Now we suppose that $\mathcal{C}$ has a pivotal structure $j$. Then linear automorphisms $E_V^{(n)}$ on $\Hom(1, V^{\otimes n})$ is defined by
\begin{equation}
  \label{eq:E-map}
  E_V^{(n)}(f) = \Phi \circ (\id_{V^{\otimes (n - 1)}} \otimes j_V^{-1})
  \circ B^{1, V^*}_{V^{\otimes(n-1)}} \circ A^{1}_{V,V^{\otimes(n-1)}}(f)
\end{equation}
where $\Phi$ is the associativity isomorphism $V^{\otimes (n - 1)} \otimes V \to V^{\otimes n}$. The {\em $n$-th Frobenius-Schur indicator} $\nu_n(V)$ of $V \in \mathcal{C}$ is given and denoted by
\begin{equation*}
  \nu_n(V) = \Trace \left( E_V^{(n)} \right),
\end{equation*}
where $\Trace$ means the usual trace of linear maps.

Suppose that $\mathcal{C}$ is a spherical fusion category. Then, as remarked above, also $\mathcal{Z}(\mathcal{C})$ is spherical and it has a canonical twist $\theta$. Ng and Schauenburg showed that
\begin{equation*}
  \nu_n(V) = \frac{1}{\dim(\mathcal{C})} \ptrace \left( \theta_{I(V)}^n \right) \quad (V \in \mathcal{C}),
\end{equation*}
where $\dim(\mathcal{C})$ is the global dimension in the sense of \cite[Definition~2.2]{MR2183279} and $I$ is a two-sided adjoint of the forgetful functor $\mathcal{Z}(\mathcal{C}) \to \mathcal{C}$. In view of~(\ref{eq:adjoint}), we have
\begin{equation}
  \label{eq:FS-ind-twist}
  \nu_n(V) = \frac{1}{\dim(\mathcal{C})}
  \sum_{(X, e_X) \in \Irr(\mathcal{Z}(\mathcal{C}))} \theta_X^n \pdim(X) \dim_\mathbb{C}(\Hom(V, X)).
\end{equation}

\subsection{Bicharacters of finite abelian groups}

We recall some definitions from the group theory. A {\em bicharacter} of an abelian group $A$ is a map $\chi: A \times A \to \mathbb{C}^\times$ that is multiplicative in each variable. It is said to be {\em symmetric} if $\chi(a, b) = \chi(b, a)$ for all $a, b \in A$, and is said to be {\em alternating} if $\chi(a, a) = 1$ for all $a \in A$.

Let $\chi$ be a bicharacter of a finite group $A$. It is clear that $\chi(a, b) \in \mu_n$, where $n$ is the greatest common divisor of the order of $a$ and that of $b$. In particular, $\chi$ takes values in $\mu_N$, where $N$ is the exponent of $A$, that is, the least common multiple of orders of all elements of $A$

Suppose that $A$ is finite. Then the group of $\mathbb{C}$-valued characters of $A$ is denoted by $A^\vee$. Given a bicharacter $\chi$ of $A$, a group homomorphism $\chi^\natural: A \to A^\vee$ is defined by $\chi^\natural(a)(x) = \chi(x, a)$. The {\em radical} of $\chi$ is defined and denoted by
\begin{equation*}
  \Rad(\chi) := \Ker(\chi^\natural) = \{ a \in A \mid \text{$\chi(x, a) = 1$ for all $x \in A$} \}.
\end{equation*}
By the orthogonality relation of characters, we have
\begin{equation}
  \label{eq:orth-rel}
  \frac{1}{|A|} \sum_{a \in A} \chi(a, z) =
  \begin{cases}
    1 & \text{if $z \in \Rad(\chi)$}, \\
    0 & \text{otherwise}.
  \end{cases}
\end{equation}
We say that $\chi$ is {\em non-degenerate} if $\Rad(\chi)$ is trivial.

Recall that a bicharacter of $A$ is a coboundary if and only if it is symmetric (see, e.g., \cite{MR0174656}). This means that, if $\chi$ is a symmetric bicharacter of $A$, there exists a function $\rho: A \to \mathbb{C}^\times$ satisfying
\begin{equation}
  \label{eq:coboundary}
  \chi(a, b) = \partial \rho(a, b) := \rho(a) \rho(a b)^{-1} \rho(b) \quad (a, b \in A).
\end{equation}
Throughout this paper, the set of such functions is denoted by $C(\chi)$. We note that $A^\vee$ acts freely and transitively on $C(\chi)$ by the multiplication of functions on $A$. In particular, $|C(\chi)| = |A^\vee| = |A|$.

Letting $a = b = 1$ in~(\ref{eq:coboundary}), we have $\rho(1) = 1$. By induction on $k$, we have
\begin{equation}
  \label{eq:coboundary-2}
  \rho(a_1) \rho(a_2) \cdots \rho(a_k) = \rho(a_1 a_2 \cdots a_k) \prod_{1 \le i < j \le k} \chi(a_i, a_j)
  \quad (a_1, \cdots, a_k \in A).
\end{equation}
This formula yields that $\rho$ takes values in $\mu_\infty$. In fact, if $a^n = 1$, then
\begin{equation*}
  \rho(a)^{2 n} = \rho(a^{2 n}) \cdot \chi(a, a)^{n (n - 1)} = \rho(1) \cdot \chi(1, a)^{n-1} = 1.
\end{equation*}
Note that $a^n = 1$ does not imply $\rho(a)^n = 1$ in general. We will encounter such an example in Section~\ref{sec:computation}. Provided that $|A|$ is odd, $a^n = 1$ implies $\rho(a)^n = 1$.

In this paper, we mean by a {\em pseudo-metric group}\footnote{Some authors call a pair $(E, q)$ of a finite abelian group $E$ and a non-degenerate quadratic form $q: E \to \mathbb{C}^\times$ a metric group. A metric group $(E, q)$ become a pseudo-metric group $(E, \chi)$ by letting $\chi = \partial q$. The word ``pseudo-metric'' does not have any topological implication.} a pair $(A, \chi)$ of a finite abelian group $A$ and a symmetric bicharacter $\chi$ of $A$. A morphism $f: (A, \chi) \to (A', \chi')$ of pseudo-metric groups is a group homomorphism $f: A \to A'$ satisfying $\chi \circ (f \times f) = \chi'$. Pseudo-metric groups form a category, say $\underline{\rm PMG}$. We call an isomorphism in this category {\em isometry}. We say that two symmetric bicharacters $\chi_1$ and $\chi_2$ of $A$ are {\em isometric} if $(A, \chi_1)$ and $(A, \chi_2)$ are isometric.

Given a finite number of pseudo-metric groups, we can define their product in an obvious way. Let $(A, \chi)$ be a pseudo-metric group and let $p_1, \cdots, p_m$ be all prime divisors of $|A|$. By the fundamental theorem of finite abelian groups and the Chinese remainder theorem, $(A, \chi)$ is decomposed into the product
\begin{equation*}
  (A, \chi) \cong (A_1, \chi_1) \times \cdots \times (A_m, \chi_m),
\end{equation*}
where $A_i$ is the Sylow $p_i$-subgroup of $A$ and $\chi_i$ is the restriction of $\chi$ to $A_i$.

Let $\chi$ be a symmetric bicharacter of $A$ and let $\sigma \in {\rm Gal}(\mathbb{C}/\mathbb{Q})$. Then it is obvious that also ${}^\sigma \chi := \sigma \circ \chi$ is a symmetric bicharacter of $A$. ${}^\sigma \chi$ is not isometric to $\chi$ in general, however, the following lemma holds:

\begin{lemma}
  \label{lem:bichar-gal}
  ${}^{\sigma\sigma}\chi$ is isometric to $\chi$.
\end{lemma}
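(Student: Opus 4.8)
The plan is to realize the desired isometry concretely as a power map on $A$, using the fact that a Galois automorphism acts on roots of unity through the cyclotomic character. First I would recall, as noted just above the statement, that $\chi$ takes values in $\mu_N$, where $N$ is the exponent of $A$. The restriction of $\sigma$ to $\mu_N$ is then given by $\sigma(\zeta) = \zeta^t$ for some integer $t$ with $\gcd(t, N) = 1$. Consequently ${}^\sigma\chi(a, b) = \chi(a, b)^t$ for all $a, b \in A$, and applying $\sigma$ once more gives
\begin{equation*}
  {}^{\sigma\sigma}\chi(a, b) = \chi(a, b)^{t^2} \quad (a, b \in A).
\end{equation*}

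Next I would observe that, since $\gcd(t, N) = 1$ and the order of every element of $A$ divides $N$, the $t$-th power map $f: A \to A$, $f(a) = a^t$, is a well-defined group automorphism of $A$; this is the candidate isometry. The verification is then a one-line computation using bimultiplicativity of $\chi$:
\begin{equation*}
  \chi(f(a), f(b)) = \chi(a^t, b^t) = \chi(a, b)^{t^2} = {}^{\sigma\sigma}\chi(a, b) \quad (a, b \in A).
\end{equation*}
Hence $f$ is an isometry between $(A, \chi)$ and $(A, {}^{\sigma\sigma}\chi)$, and the lemma follows.

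I do not expect a serious obstacle here; the only point requiring care is the identification of the Galois-twisted bicharacter with a pointwise power of $\chi$ via the cyclotomic character, together with checking that the exponent $t$ is invertible modulo $N$ so that $f$ is genuinely an automorphism. It is worth emphasizing \emph{why} this argument yields ${}^{\sigma\sigma}\chi$ rather than ${}^\sigma\chi$: to realize the $e$-th pointwise power of $\chi$ as the pullback along a power automorphism $a \mapsto a^s$, one needs $s^2 \equiv e \pmod{N}$, i.e.\ a square root of the exponent. While $t$ itself need not be a square modulo $N$, the exponent $t^2$ always admits the evident square root $t$, which is exactly what the construction uses. This also explains the preceding remark that ${}^\sigma\chi$ is in general \emph{not} isometric to $\chi$.
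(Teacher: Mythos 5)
Your proof is correct and is essentially the paper's own argument: both identify the action of $\sigma$ on $\mu_N$ with a power map $\zeta \mapsto \zeta^t$, $\gcd(t,N)=1$, deduce ${}^{\sigma\sigma}\chi(a,b) = \chi(a,b)^{t^2} = \chi(a^t,b^t)$, and conclude via the automorphism $a \mapsto a^t$. The closing remark explaining why the square of the cyclotomic exponent (but not the exponent itself) is realizable as a pullback along a power map is a nice addition but does not change the argument.
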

\begin{proof}
  Let $N$ be the exponent of $A$. Fix a primitive $N$-th root $\zeta$ of unity. Then there exists a map $q: A \times A \to \mathbb{Z}_N$ such that $\chi(a, b) = \zeta^{q(a, b)}$. Recall that ${\rm Gal}(\mathbb{Q}[\zeta]/\mathbb{Q})$ is isomorphic to $\mathbb{Z}_N^\times$; This means that $\sigma(\zeta) = \zeta^s$ for some $s \in \mathbb{Z}_N^\times$, and hence
  \begin{equation*}
    {}^{\sigma\sigma} \chi(a, b) = \zeta^{s^2 q(a, b)} = \chi(a, b)^{s^2} = \chi(a^s, b^s)
    \quad (a, b \in A).
  \end{equation*}
  Since $s$ is relatively prime to $N$, the assignment $a \mapsto a^s$ gives an automorphism on $A$. Hence the above equation means that ${}^{\sigma\sigma}\chi$ is isometric to $\chi$.
\end{proof}

\begin{remark}
  Let $\sigma \in {\rm Gal}(\mathbb{C}/\mathbb{Q})$. Then the assignment $(A, \chi) \mapsto (A, {}^\sigma \chi)$ gives rise to an endofunctor $F_\sigma$ on $\underline{\rm PMG}$. In the same way as the proof of the above lemma, one can show that $F_\sigma \circ F_\sigma$ is isomorphic to the identity functor.
\end{remark}

\subsection{Fourier transform associated with bicharacters}

Let $L^2(X)$ denote the vector space of $\mathbb{C}$-valued functions on a finite set $X$. The {\em Fourier transform} on a finite abelian group $A$ is the linear map $\mathcal{F}: L^2(A) \to L^2(A^\vee)$ defined by
\begin{equation*}
  \mathcal{F}(f)(\lambda) = \frac{1}{\sqrt{|A|}} \sum_{x \in A} f(x) \lambda(x)^{-1}
  \quad (f \in L^2(A), \lambda \in A^\vee).
\end{equation*}
Fix a symmetric bicharacter $\chi$ of $A$. Set $\mathcal{F}_\chi := \chi^* \circ \mathcal{F}$, where $\chi^*: L^2(A^\vee) \to L^2(A)$ is the linear map induced from the group homomorphism $\chi^\natural: A \to A^\vee$. We call $\mathcal{F}_\chi$ the {\em Fourier transform associated with $\chi$}. More precisely,
\begin{equation*}
  \mathcal{F}_\chi(f)(a) = \frac{1}{\sqrt{|A|}} \sum_{x \in A} f(x) \chi(x, a)^{-1}
  \quad (f \in L^2(A), a \in A).
\end{equation*}

If $\chi$ is non-degenerate, then $\mathcal{F}_\chi$ is bijective with the inverse operator
\begin{equation*}
  \mathcal{F}^{-1}_\chi(f)(a) = \frac{1}{\sqrt{|A|}} \sum_{x \in A} f(x) \chi(x, a)
  \quad (f \in L^2(A), a \in A).
\end{equation*}
Note that we also deal with the case where $\chi$ is degenerate. In general, $\mathcal{F}_\chi$ is not bijective. The kernel and the image of $\mathcal{F}_\chi$ will be discussed later. It will turn out that $\mathcal{F}_\chi$ is bijective if and only if $\chi$ is non-degenerate.

The {\em convolution product} of $f, g \in L^2(A)$ is defined by
\begin{equation*}
  (f * g)(a) = \sum_{x \in A} f(x) g(x^{-1} a) \quad (a \in A).
\end{equation*}
$L^2(A)$ is a commutative associative algebra with respect to the convolution. We denote by $f^{*n}$ the $n$-fold iterated convolution product of $f \in L^2(A)$. The following identity is well-known and easy to prove:
\begin{equation}
  \label{eq:fourier-convolution}
  \mathcal{F}_\chi(f * g)(a) = \sqrt{|A|} \, \mathcal{F}_\chi(f)(a) \cdot \mathcal{F}_\chi(g)(a)
  \quad (f, g \in L^2(A), a \in A).
\end{equation}

Let $\delta_a \in L^2(A)$ be the delta function at $a \in A$, that is, the function on $A$ defined by $\delta_a(x) = \delta_{a, x}$ ($x \in A$). The set $\{ \delta_a \}_{a \in A}$ is a basis of $L^2(A)$. The matrix representation of $\mathcal{F}_\chi$ with respect to this basis is given as follows:
\begin{equation*}
  \mathcal{F}_\chi(\delta_a) = \frac{1}{\sqrt{|A|}} \sum_{b \in A} \chi(a, b)^{-1} \delta_b \quad (a \in A).
\end{equation*}
In particular,
\begin{equation}
  \label{eq:fourier-trace}
  \Trace(\mathcal{F}_\chi) = \frac{1}{\sqrt{|A|}} \sum_{a \in A} \chi(a, a)^{-1}.
\end{equation}

In what follows, we study the trace of $\mathcal{F}_\chi$. We first discuss the kernel and the image of $\mathcal{F}_\chi$. For a subset $K \subset A$, define $P_K: L^2(A) \to L^2(A)$ by
\begin{equation*}
  P_K(f)(a) = \frac{1}{|K|} \sum_{x \in K} f(a x) \quad (f \in L^2(A), a \in A).
\end{equation*}
Suppose that $K$ is a subgroup of $A$. Then we can regard $L^2(A/K)$ as a subspace of $L^2(A)$ via the map induced from the quotient map $A \to A/K$. $P_K$ is a projection from $L^2(A)$ onto $L^2(A/K)$.

Now let $J := \Rad(\chi)$ and let $\overline{\chi}$ be the bicharacter of $A/J$ induced from $\chi$. Then the following lemma can be proved by direct computation.

\begin{lemma}
  \label{lem:fourier-factor}
  $\mathcal{F}_\chi^{} = \sqrt{|J|} \mathcal{F}_{\overline{\chi}}^{}\,P_J$.
\end{lemma}

Since $\overline{\chi}$ is non-degenerate, $\mathcal{F}_{\overline{\chi}}^{}: L^2(A/J) \to L^2(A/J)$ is bijective. Therefore, the image of $\mathcal{F}_\chi$ is $L^2(A/J)$ and the kernel of $\mathcal{F}_\chi$ is that of $P_J$. In particular, $\mathcal{F}_\chi$ is bijective if and only if $\chi$ is non-degenerate.

In view of the previous lemma, we set $F_\chi := |J|^{-\frac{1}{2}} \mathcal{F}_\chi$. Then, again by direct computation, we can verify the following lemma:

\begin{lemma}
  \label{lem:fourier-square}
  $F_\chi^2(f)(a) = P_J^{}(f)(a^{-1})$.
\end{lemma}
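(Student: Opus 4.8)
The plan is to verify the identity by a direct computation, expanding the left-hand side from the definitions of $F_\chi = |J|^{-\frac{1}{2}} \mathcal{F}_\chi$ and of the Fourier transform, and then collapsing the resulting double sum to a single one by means of the orthogonality relation~(\ref{eq:orth-rel}). First I would substitute the formula for $\mathcal{F}_\chi$ into itself and absorb the two factors $|J|^{-\frac{1}{2}}$ and $|A|^{-\frac{1}{2}}$ to obtain
\begin{equation*}
  F_\chi^2(f)(a) = \frac{1}{|J|\,|A|} \sum_{x \in A} \sum_{y \in A} f(x)\, \chi(x, y)^{-1}\, \chi(y, a)^{-1}.
\end{equation*}

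Next comes the key manipulation, which exploits that $\chi$ is symmetric and multiplicative in each variable. Since $\chi(x, y)^{-1} = \chi(y, x)^{-1}$, the product of the two characters becomes $\chi(y, x)^{-1} \chi(y, a)^{-1} = \chi(y, x a)^{-1}$, so that after exchanging the order of summation I can isolate the inner sum $\sum_{y \in A} \chi(y, x a)^{-1}$. Applying~(\ref{eq:orth-rel}) with $z = (x a)^{-1}$, and using that $(x a)^{-1} \in J$ if and only if $x a \in J$ because $J = \Rad(\chi)$ is a subgroup, this sum equals $|A|$ when $x a \in J$ and $0$ otherwise. Hence
\begin{equation*}
  F_\chi^2(f)(a) = \frac{1}{|J|} \sum_{\substack{x \in A \\ x a \in J}} f(x).
\end{equation*}

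Finally I would reparametrise the surviving sum: the constraint $x a \in J$ is equivalent to $x \in a^{-1} J$, so setting $x = a^{-1} t$ with $t$ ranging over $J$ yields $F_\chi^2(f)(a) = \frac{1}{|J|} \sum_{t \in J} f(a^{-1} t) = P_J(f)(a^{-1})$, which is the asserted formula. The computation is entirely routine; the only points demanding care are the correct use of symmetry to merge the two copies of $\chi$ into $\chi(y, x a)^{-1}$, so that the orthogonality relation applies cleanly to the $y$-summation, and the bookkeeping that converts the condition $x a \in J$ into the shift by $a^{-1}$ defining $P_J$ evaluated at $a^{-1}$.
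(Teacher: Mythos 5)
Your computation is correct, and it is precisely the ``direct computation'' that the paper invokes without writing out: expand $F_\chi^2$, merge the two characters using symmetry and multiplicativity, collapse the inner sum via the orthogonality relation~(\ref{eq:orth-rel}), and reparametrise the surviving coset sum to recognise $P_J(f)(a^{-1})$. No gaps; the handling of the condition $xa \in J$ and the shift by $a^{-1}$ are exactly right.
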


Let $V_\lambda$ be the eigenspace of $F_\chi$ with eigenvalue $\lambda$. Since $F_\chi^4$ acts on $L^2(A/J)$ as identity by the above lemma, we have a decomposition
\begin{equation*}
  L^2(A) = \Ker(F_\chi) \oplus V_{+1} \oplus V_{-1} \oplus V_{+{\rm i}} \oplus V_{-{\rm i}}
\end{equation*}
where ${\rm i} = \sqrt{-1}$. This implies that $\Trace(F_\chi) = r + s \sqrt{-1}$, where
\begin{equation*}
  r = \dim(V_{+1})       - \dim(V_{-1})       \text{\quad and \quad}
  s = \dim(V_{+{\rm i}}) - \dim(V_{-{\rm i}}).
\end{equation*}

It seems to be difficult to determine $r$ and $s_\chi$ explicitly, however, we can determine the parities of them. Let $d_{\pm} = \frac{1}{2} (|A/J| \pm \# \{ a \in A/J \mid a^2 = 1 \}) \in \mathbb{Z}$.

\begin{lemma}
  \label{lem:fourier-eigen}
  $r \equiv d_+$ and $s \equiv d_- \pmod{2}$.
\end{lemma}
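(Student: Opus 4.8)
The plan is to reduce everything to the non-degenerate quotient $A/J$ and then exploit that $F_\chi^2$ acts there as the inversion operator. First I would observe, using Lemma~\ref{lem:fourier-factor}, that $F_\chi = |J|^{-\frac12}\mathcal{F}_\chi = \mathcal{F}_{\overline{\chi}}^{}\,P_J$, so that $\Ker(F_\chi) = \Ker(P_J)$ and $F_\chi$ maps the subspace $L^2(A/J) = \operatorname{Im}(P_J)$ into itself, acting there as the bijection $\mathcal{F}_{\overline{\chi}}^{}$. Consequently the four eigenspaces $V_{+1}, V_{-1}, V_{+{\rm i}}, V_{-{\rm i}}$ attached to the nonzero eigenvalues all lie in $L^2(A/J)$, and their direct sum is exactly $L^2(A/J)$. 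By Lemma~\ref{lem:fourier-square}, the restriction of $F_\chi^2$ to $L^2(A/J)$ is the inversion operator $S$ given by $S(f)(a) = f(a^{-1})$, which satisfies $S^2 = \id$.

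Next I would split the eigenspaces according to the value of $F_\chi^2 = S$ on them. Since $(\pm 1)^2 = 1$ and $(\pm{\rm i})^2 = -1$, the sum $V_{+1} \oplus V_{-1}$ is precisely the $(+1)$-eigenspace of $S$, namely the space of \emph{even} functions $f(a) = f(a^{-1})$ on $A/J$, while $V_{+{\rm i}} \oplus V_{-{\rm i}}$ is the $(-1)$-eigenspace of $S$, the space of \emph{odd} functions. A standard orbit count for the involution $a \mapsto a^{-1}$ on $A/J$ then gives the dimensions: passing to the basis of $L^2(A/J)$ indexed by inversion-orbits, the even functions have dimension equal to the number of orbits, namely $\frac{1}{2}(|A/J| + \#\{a \mid a^2 = 1\}) = d_+$, and the odd functions have dimension $\frac{1}{2}(|A/J| - \#\{a \mid a^2 = 1\}) = d_-$ (the fixed points of the involution contribute $0$ to the odd part). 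Hence $\dim(V_{+1}) + \dim(V_{-1}) = d_+$ and $\dim(V_{+{\rm i}}) + \dim(V_{-{\rm i}}) = d_-$.

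The final step is the elementary but decisive observation that an integer difference has the same parity as the corresponding sum: for integers $m, n$ one has $m - n \equiv m + n \pmod 2$. Applying this, $r = \dim(V_{+1}) - \dim(V_{-1}) \equiv \dim(V_{+1}) + \dim(V_{-1}) = d_+ \pmod 2$, and likewise $s \equiv d_- \pmod 2$, which is the claim. The computation is quite short once the reduction is in place; the only point that genuinely requires care is the first paragraph, namely verifying that $L^2(A/J)$ is $F_\chi$-stable and accounts for all the nonzero eigenvalues, so that the parities are controlled by the coarse even/odd splitting under inversion rather than by the finer eigenvalue decomposition, which seems difficult to pin down exactly.
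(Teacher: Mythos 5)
Your proposal is correct and follows essentially the same route as the paper: both identify $F_\chi^2$ on $L^2(A/J)$ with the inversion operator via Lemma~\ref{lem:fourier-square}, compute the dimensions of its $(\pm 1)$-eigenspaces by counting orbits of $a \mapsto a^{-1}$ (giving $d_+$ and $d_-$), and conclude with the parity identity $m - n \equiv m + n \pmod 2$. Your first paragraph makes explicit a point the paper leaves implicit (that the nonzero eigenspaces of $F_\chi$ exhaust $L^2(A/J)$), but this is a matter of presentation, not a different argument.
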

\begin{proof}
  Let $W_\lambda$ be the eigenspace of $F_\chi^2$ with eigenvalue $\lambda$. Then we have decompositions $W_{+1} = V_{+1} \oplus V_{-1}$ and $W_{-1} = V_{+{\rm i}} \oplus V_{-{\rm i}}$. Therefore,
  \begin{equation*}
    r = \dim(V_{+1}) - \dim(V_{-1}) \equiv \dim(V_{+1}) + \dim(V_{-1}) = \dim(W_{+1}) \pmod{2}.
  \end{equation*}
  Similarly, $s \equiv \dim(W_{-1}) \pmod{2}$.

  In what follows, we show that $\dim(W_{\pm 1}) = d_{\pm}$. By Lemma~\ref{lem:fourier-square}, we have that the restriction of $F_\chi^2$ on $L^2(A/J)$ is represented by the permutation matrix corresponding to the permutation $\sigma$ on $A/J$ given by $\sigma(a) = a^{-1}$ ($a \in A$). Therefore, by easy combinatorial arguments,
  \begin{equation*}
    \dim(W_{+1}) = \text{(the number of cycles in the cycle decomposition of $\sigma$)} = d_+.
  \end{equation*}
  Now we immediately have $\dim(W_{-1}) = |A/J| - \dim(W_{+1}) = d_-$.
\end{proof}

Summarizing results in this subsection, we have the following theorem:

\begin{theorem}
  \label{thm:fourier-eigen}
  Let $A$ be a finite abelian group and let $\chi$ be a symmetric bicharacter of $A$ with radical $J$. Then $\Trace(\mathcal{F}_\chi) = \sqrt{|J|} \cdot (n_+ + n_-\sqrt{-1})$ for some $n_{\pm} \in \mathbb{Z}$. $n_+$ and $n_-$ satisfy the following congruence equation:
  \begin{equation*}
    n_{\pm} \equiv d_{\pm} := \frac{1}{2} (|A/J| \pm \# \{ a \in A/J \mid a^2 = 1 \}) \pmod{2}.
  \end{equation*}
\end{theorem}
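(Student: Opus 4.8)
The plan is to assemble the pieces developed earlier in this subsection, since the statement is essentially a repackaging of Lemma~\ref{lem:fourier-factor}, Lemma~\ref{lem:fourier-square}, and Lemma~\ref{lem:fourier-eigen}. I would begin by passing to the normalized operator $F_\chi = |J|^{-1/2}\mathcal{F}_\chi$: by this very definition $\Trace(\mathcal{F}_\chi) = \sqrt{|J|}\,\Trace(F_\chi)$, which produces the prefactor $\sqrt{|J|}$ appearing in the statement (this factor being exactly the one extracted in Lemma~\ref{lem:fourier-factor}). It then remains to show that $\Trace(F_\chi) = n_+ + n_-\sqrt{-1}$ for integers $n_\pm$ satisfying the claimed congruences.

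Next I would use the eigenspace decomposition of $F_\chi$ recorded just before the statement. By Lemma~\ref{lem:fourier-square}, $F_\chi^2(f)(a) = P_J(f)(a^{-1})$, so $F_\chi$ annihilates $\Ker(F_\chi) = \Ker(P_J)$ and restricts to an operator on $L^2(A/J)$ whose fourth power is the identity; hence its nonzero eigenvalues lie among $\{+1, -1, +{\rm i}, -{\rm i}\}$, giving the decomposition $L^2(A) = \Ker(F_\chi) \oplus V_{+1} \oplus V_{-1} \oplus V_{+{\rm i}} \oplus V_{-{\rm i}}$. Since the kernel contributes nothing to the trace, $\Trace(F_\chi) = r + s\sqrt{-1}$ with $r = \dim(V_{+1}) - \dim(V_{-1})$ and $s = \dim(V_{+{\rm i}}) - \dim(V_{-{\rm i}})$; setting $n_+ = r$ and $n_- = s$ yields integers, as required.

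Finally I would invoke Lemma~\ref{lem:fourier-eigen}, which gives $r \equiv d_+$ and $s \equiv d_- \pmod{2}$, that is, $n_\pm \equiv d_\pm \pmod{2}$. The genuine content --- identifying $\dim(W_{\pm 1})$ with the cycle data of the inversion permutation on $A/J$, and thereby with $d_\pm$ --- has already been carried out in that lemma, so there is no real obstacle remaining at this stage. The only point meriting a moment's attention is the harmless observation that $\Ker(F_\chi)$ does not interfere with the four-eigenvalue decomposition, which is immediate from the earlier identification of the image of $\mathcal{F}_\chi$ as exactly $L^2(A/J)$.
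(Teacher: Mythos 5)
Your proposal is correct and takes essentially the same approach as the paper: there the theorem is stated explicitly as a summary of the preceding results, namely the normalization $F_\chi = |J|^{-1/2}\mathcal{F}_\chi$ coming from Lemma~\ref{lem:fourier-factor}, the four-eigenvalue decomposition forced by Lemma~\ref{lem:fourier-square}, and the parity computation of Lemma~\ref{lem:fourier-eigen}, which is precisely the assembly you carry out. The side point you flag---that $\Ker(F_\chi)=\Ker(P_J)$ is complementary to $L^2(A/J)$, so the kernel does not disturb the trace---is likewise implicit in the paper's discussion immediately following Lemma~\ref{lem:fourier-factor}.
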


If $A$ is odd, $d_{\pm} = \frac{1}{2} (|A/J| + 1)$. Therefore, the following corollary follows:

\begin{corollary}
  \label{cor:fourier-eigen-odd}
  Notations are as in Theorem~\ref{thm:fourier-eigen}. Suppose that $|A|$ is odd. Then:
  \begin{enumalph}
  \item $n_+$ is odd if and only if $|A/J| \equiv 1 \pmod{4}$.
  \item $n_-$ is odd if and only if $|A/J| \equiv 3 \pmod{4}$.
  \end{enumalph}
\end{corollary}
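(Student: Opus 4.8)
The plan is to combine the congruence furnished by Theorem~\ref{thm:fourier-eigen} with an explicit evaluation of the parity of $d_{\pm}$ under the odd-order hypothesis. First I would note that, since $J = \Rad(\chi)$ is a subgroup of $A$, the order $|A/J|$ divides $|A|$ and is therefore odd. In a finite abelian group of odd order there is no element of order $2$, so the only solution of $a^2 = 1$ in $A/J$ is the identity; hence $\#\{a \in A/J \mid a^2 = 1\} = 1$. Substituting this into the defining formula $d_{\pm} = \tfrac{1}{2}(|A/J| \pm \#\{a \in A/J \mid a^2=1\})$ yields $d_+ = \tfrac{1}{2}(|A/J|+1)$ and $d_- = \tfrac{1}{2}(|A/J|-1)$, which is the remark preceding the statement.

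Next, writing $m := |A/J|$ (an odd integer), I would read off the parities of $d_{\pm}$ according to the residue of $m$ modulo $4$. If $m \equiv 1 \pmod 4$, then $m+1 \equiv 2$ and $m-1 \equiv 0 \pmod 4$, so $\tfrac{1}{2}(m+1)$ is odd while $\tfrac{1}{2}(m-1)$ is even; if instead $m \equiv 3 \pmod 4$, then $m+1 \equiv 0$ and $m-1 \equiv 2 \pmod 4$, so $\tfrac{1}{2}(m+1)$ is even while $\tfrac{1}{2}(m-1)$ is odd. Consequently $d_+$ is odd exactly when $m \equiv 1 \pmod 4$, and $d_-$ is odd exactly when $m \equiv 3 \pmod 4$.

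Finally, Theorem~\ref{thm:fourier-eigen} gives $n_{\pm} \equiv d_{\pm} \pmod 2$, so the two parity equivalences transfer verbatim from $d_{\pm}$ to $n_{\pm}$, establishing both (a) and (b). I do not expect any genuine obstacle: essentially all the content is already packaged into Theorem~\ref{thm:fourier-eigen}, and the only substantive extra input is the elementary fact that an odd-order abelian group possesses a unique square root of the identity. The single point demanding care is the arithmetic bookkeeping modulo $4$ versus modulo $2$, so that the ``$+$'' and ``$-$'' cases are matched to the correct residue class and not inadvertently transposed.
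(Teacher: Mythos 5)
Your proposal is correct and follows essentially the same route as the paper, which simply observes that for $|A|$ odd the only solution of $a^2 = 1$ in $A/J$ is the identity, so that $d_{\pm} = \tfrac{1}{2}(|A/J| \pm 1)$, and then invokes the congruence $n_{\pm} \equiv d_{\pm} \pmod{2}$ from Theorem~\ref{thm:fourier-eigen}. Your write-up in fact fills in the short mod-$4$ bookkeeping that the paper leaves implicit (and silently corrects the paper's typographical slip writing $d_{\pm} = \tfrac{1}{2}(|A/J|+1)$ for both signs).
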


\section{Indicators of Tambara-Yamagami categories}
\label{sec:indicators}

\subsection{Definition}

Let $A$ be a finite abelian group. In \cite{MR1659954}, Tambara and Yamagami classified fusion categories with representatives of isomorphism classes of simple objects $A \sqcup \{ m \}$ satisfying fusion rules
\begin{equation}
  \label{eq:fusion-rules-TY}
  a \otimes b \cong a b, \quad
  a \otimes m \cong m \cong m \otimes a, \quad
  m \otimes m \cong \bigoplus_{x \in A} x \quad (a, b \in A).
\end{equation}
They showed that such categories are parametrized by pairs $(\chi, \tau)$ of a non-de\-gen\-er\-ate symmetric bicharacter $\chi$ of $A$ and a square root $\tau$ of $|A|^{-1}$. The corresponding category is denoted by $\TY(A, \chi, \tau)$ and defined as follows:

\begin{definition}[{\cite[Definition~3.1]{MR1659954}}]
  \label{def:TY}
  $\TY(A, \chi, \tau)$ is a skeletal category with objects finite direct sums of elements of $S := A \sqcup \{ m \}$. $\Hom$-sets between elements of $S$ are given by
  \begin{equation*}
    \Hom(s, s') = \left\{
      \begin{array}{cl}
        \mathbb{C} & \text{if $s = s'$}, \\ 0 & \text{otherwise},
      \end{array}
    \right.
  \end{equation*}
  and the compositions of morphisms are obvious ones. Tensor products of elements of $S$ are given by \eqref{eq:fusion-rules-TY} (but with $\cong$ replaced by $=$). The unit object is $1 \in A$. The left and the right unit constraints are identity morphisms. The associativity constraint $\Phi$ is determined by
  \begin{gather*}
    \Phi_{a, m, b} = \chi(a, b) \id_m: m \to m, \\
    \Phi_{m, a, m} = (\chi(a, x) \delta_{x, y} \id_x)_{x, y}: \bigoplus_{x \in A} x \to \bigoplus_{y \in A} y, \\
    \Phi_{m, m, m} = (\tau \chi(x, y)^{-1} \id_m)_{x, y}: \bigoplus_{x \in A} m \to \bigoplus_{y \in A} m,
  \end{gather*}
  where $a, b \in A$, and the other $\Phi_{s, t, u}$ ($s, t, u \in S$) are identity morphisms.
\end{definition}

Two Tambara-Yamagami categories $\TY(A, \chi, \tau)$ and $\TY(A', \chi', \tau')$ are monoid\-al\-ly equivalent if and only if $(A, \chi)$ and $(A', \chi')$ are isometry and $\tau = \tau'$.

Throughout this section, we fix a triple $(A, \chi, \tau)$. $\TY(A, \chi, \tau)$ is left rigid. The duality is described as follows: The dual object of $a \in A$ is $a^* := a^{-1}$ with morphisms $\id_1: 1 \to a \otimes a^*$ and $\id_1: a^* \otimes a \to 1$. The dual object of $m$ is $m^* := m$ with morphisms $\iota:  1 \to m \otimes m^*$ and $\tau^{-1} p: m^* \otimes m \to 1$ where $\iota: 1 \to m \otimes m$ is the injection and $p: m \otimes m \to 1$ is the projection. $\TY(A, \chi, \tau)$ is also right rigid, and hence it is a fusion category.

Note that the bidual $(-)^{**}$ is equal to the identity functor of $\TY(A, \chi, \tau)$. There exists a pivotal structure $j$ on $\TY(A, \chi, \tau)$ determined by $j_a = \id_a$ ($a \in A$) and $j_m = \sgn(\tau) \id_m$, where $\sgn$ means the sign of a real number. This gives the canonical pivotal structure on $\TY(A, \chi, \tau)$ in the sense that the pivotal dimension with respect to $j$ coincides with the Frobenius-Perron dimension:
\begin{equation*}
  \pdim_j(a) = \FPdim(a) = 1 \ (a \in A), \quad
  \pdim_j(m) = \FPdim(m) = \sqrt{|A|}.
\end{equation*}
Thus the global dimension of $\TY(A, \chi, \tau)$ is $2|A|$.

\subsection{Formula for simple objects}

We compute Frobenius-Schur indicators of simple objects of $\TY(A, \chi, \tau)$. The computation of indicators of $a \in A$, which is considered as an object of $\TY(A, \chi, \tau)$, is easy and can be done as follows:

\begin{theorem}
  \label{thm:fs-ind-0}
  $\nu_n(a) = \delta_{a^n, 1} \ (a \in A)$.
\end{theorem}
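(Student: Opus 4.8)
The plan is to compute the single linear endomorphism $E_a^{(n)}$ of $\Hom(1, a^{\otimes n})$ explicitly and read off its trace. First I would identify the domain. Since $\TY(A,\chi,\tau)$ is skeletal and $a \otimes b = ab$ for $a, b \in A$ by the fusion rules~(\ref{eq:fusion-rules-TY}), we have $a^{\otimes n} = a^n$ as an object, so that $\Hom(1, a^{\otimes n}) = \Hom(1, a^n)$ is $\mathbb{C}\,\id_1$ if $a^n = 1$ and is $0$ otherwise. When $a^n \neq 1$ the domain is zero, hence $E_a^{(n)}$ is the zero map and $\nu_n(a) = \Trace(E_a^{(n)}) = 0 = \delta_{a^n, 1}$, as required. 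It then remains to treat the case $a^n = 1$, in which $\Hom(1, a^{\otimes n})$ is one-dimensional and $\nu_n(a)$ is exactly the scalar by which $E_a^{(n)}$ acts; the whole point becomes showing that this scalar equals $1$.

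Assume $a^n = 1$. I would unwind the defining formula~(\ref{eq:E-map}) factor by factor, using that every structural morphism among objects of $A$ is trivial. Indeed $j_a = \id_a$, so $j_a^{-1} = \id_a$; the evaluation and coevaluation of $a$ are the identity morphisms $\id_1: a^{-1} \otimes a \to 1$ and $\id_1: 1 \to a \otimes a^{-1}$ prescribed in Definition~\ref{def:TY}; and every associativity constraint occurring in the computation is of the form $\Phi_{s,t,u}$ with $s, t, u \in A$, hence an identity morphism (the only nontrivial associators involve $m$, which never appears here). The adjunction isomorphisms $A^1_{a, a^{\otimes(n-1)}}$ and $B^{1, a^{-1}}_{a^{\otimes(n-1)}}$ are built solely from these (co)evaluations together with unit and associativity constraints, so under the canonical identifications $a^{-1} \otimes 1 = a^{-1} = 1 \otimes a^{-1}$ and $(a^{-1})^* = a$ each of them carries the generator to the generator of the relevant one-dimensional $\Hom$-space. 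The same holds for $\id_{a^{\otimes(n-1)}} \otimes j_a^{-1}$ and for the final associator $\Phi: a^{\otimes(n-1)} \otimes a \to a^{\otimes n}$. Composing, $E_a^{(n)} = \id$, so the scalar is $1$ and $\nu_n(a) = 1 = \delta_{a^n, 1}$.

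The only real work is bookkeeping: one must track the canonical isomorphisms hidden inside the definitions of $A^X_{Y,Z}$ and $B^{X,Y}_Z$ from~\cite[XIV.2.2]{MR1321145}, each a composite of unit constraints, associators, and (co)evaluations, and verify that none of them contributes a nontrivial scalar for objects of $A$. What makes this routine is the observation that the full subcategory of $\TY(A,\chi,\tau)$ on the objects of $A$ is a monoidal subcategory on which the associativity constraint, the unit constraints, and all duality morphisms are identities. Restricted to this strictly associative subcategory, whose duality is given by inversion in $A$, the whole Frobenius-Schur computation collapses to the group law of $A$, and the identity $\nu_n(a) = \delta_{a^n, 1}$ becomes manifest.
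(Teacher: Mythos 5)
Your proposal is correct and follows essentially the same route as the paper: identify $\Hom(1, a^{\otimes n})$ as $\mathbb{C}$ or $0$ according to whether $a^n = 1$, and show that $E_a^{(n)}$ is the identity in the nonzero case. The paper simply asserts that $E_a^{(n)}$ is the identity, whereas you supply the (correct) bookkeeping justifying it — namely that all associators, unit constraints, duality morphisms, and the pivotal structure restricted to the objects of $A$ are identities.
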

\begin{proof}
  It follows from the definition that $\Hom(1, a^{\otimes n})$ is $\mathbb{C}$ if $a^n = 1$ and zero otherwise. Hence $\nu_n(a) = 0$ unless $a^n = 1$. If $a^n = 1$, then $E_a^{(n)}$ given by~(\ref{eq:E-map}) with $V = a$ is identity, and hence $\nu_n(a) = 1$. Summarizing, we have the result.
\end{proof}

The problem is the computation of $\nu_n(m)$. Since $\TY(A, \chi, \tau)$ is not concrete, the space $\Hom(1, m^{\otimes n})$ is hard to analyze. We use formula~(\ref{eq:FS-ind-twist}) to avoid this difficulty. The Drinfeld center of $\TY(A, \chi, \tau)$ is studied by some authors including Izumi \cite{MR1832764} and Gelaki-Naidu-Nikshych \cite{GNN09}. We follow the formulation of \cite{GNN09} and obtain the following list of simple objects of the left Drinfeld center of $\TY(A, \chi, \tau)$. (Remark that they studied the right Drinfeld center in \cite{GNN09}.)
\begin{itemize}
\item $X_{a, \varepsilon} = (a, s_{a, \varepsilon})$, parametrized by pairs $(a, \varepsilon)$ of $a \in A$ and a square root $\varepsilon$ of $\chi(a, a)$. The half-braiding $s_{a, \varepsilon}: a \otimes (-) \to (-) \otimes a$ is determined by
  \begin{equation*}
    s_{a, \varepsilon}(m) = \varepsilon, \quad
    s_{a, \varepsilon}(x) = \chi(a, x) \quad (x \in A).
  \end{equation*}
\item $Y_{a, b} = (a \oplus b, t_{a, b})$, parametrized by unordered pairs $(a, b)$ of distinct elements of $A$. Under the identification
  \begin{align*}
    \Hom(&(a \oplus b) \otimes X, Y \otimes (a \oplus b)) \\ \cong &
    \begin{pmatrix}
      \Hom(a \otimes X, Y \otimes a) & \Hom(a \otimes X, Y \otimes b) \\
      \Hom(b \otimes X, Y \otimes a) & \Hom(b \otimes X, Y \otimes b)
    \end{pmatrix},
  \end{align*}
  the half-braiding $t_{a, b}: (a \oplus b) \otimes (-) \to (-) \otimes (a \oplus b)$ is determined by
  \begin{equation*}
    t_{a, b}(m) = \begin{pmatrix} 0 & 1 \\ \chi(a, b) & 0 \end{pmatrix}, \quad
    t_{a, b}(x) = \begin{pmatrix} \chi(b, x) & 0 \\ 0 & \chi(a, x) \end{pmatrix} \quad (x \in A).
  \end{equation*}
\item $Z_{\rho, \Delta} = (m, u_{\rho, \Delta})$, parametrized by pairs $(\rho, \Delta)$ of $\rho \in C(\chi)$, where $C(\chi)$ is the same meaning as in Section~\ref{sec:preliminaries}, and a square root $\Delta$ of $\tau \sum_{a \in A} \rho(a)$. The half-braiding $u_{\rho, \Delta}: m \otimes (-) \to (-) \otimes m$ is determined by
  \begin{equation*}
    u_{\rho, \Delta}(m) = \Delta \bigoplus_{x \in A} \rho(x^{-1}) \id_x: \bigoplus_{x \in A} x \to \bigoplus_{x \in A} x
  \end{equation*}
  and $u_{\rho, \Delta}(x) = \rho(x)^{-1}$ ($x \in A$).
\end{itemize}

Let $\theta$ be the canonical twist of the left Drinfeld center $\mathcal{Z}$ of $\TY(A, \chi, \tau)$. Suppose that $X = (V, e_V) \in \mathcal{Z}$ is a simple object. Since the quantum trace in $\mathcal{Z}$ coincides with the pivotal trace in $\mathcal{Z}$, $\theta_X \in \mathbb{C}$ is the unique element satisfying
\begin{equation*}
  (1 \mathop{\longrightarrow} V \otimes V^*
  \mathop{\longrightarrow}^{e_{V, V^*}} V^* \otimes V
  \mathop{\longrightarrow} 1) \cdot \theta_{X} = \FPdim(V).
\end{equation*}
By this observation, we have
\begin{equation*}
  \theta_{X_{a, \varepsilon}} = \chi(a, a), \quad
  \theta_{Y_{a, b}} = \chi(a, b) \text{\quad and \quad}
  \theta_{Z_{\rho, \Delta}} = \Delta.
\end{equation*}

Now we have sufficient data to compute $\nu_n(m)$. In what follows, let $\widehat{\rho}$ denote the Fourier transform of a function $\rho$ on $A$ associated with $\chi$.

\begin{theorem}
  \label{thm:fs-ind-1}
  If $n$ is odd, $\nu_n(m) = 0$. Fix $\rho \in C(\chi)$. If $n = 2k$ is even,
  \begin{equation}
    \label{eq:fs-ind-ty-1}
    \nu_n(m)
    = \frac{\sgn(\tau)^k}{\sqrt{|A|}} \sum_{a \in A} \widehat{\rho}(a)^k
    = \frac{\sgn(\tau)^k}{|A|^{\frac{1}{2}(k-1)}} \rho^{*k}(1).
  \end{equation}
\end{theorem}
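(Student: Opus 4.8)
The plan is to evaluate $\nu_n(m)$ through the Ng--Schauenburg formula~(\ref{eq:FS-ind-twist}) applied with $V = m$ and the canonical pivotal structure, for which $\dim(\TY(A,\chi,\tau)) = 2|A|$. The first step is to decide which simple objects of the center $\mathcal{Z} = \mathcal{Z}(\TY(A,\chi,\tau))$ actually contribute. Since the underlying objects of $X_{a,\varepsilon}$ and $Y_{a,b}$ lie in $A$, we have $\Hom(m, X_{a,\varepsilon}) = 0 = \Hom(m, Y_{a,b})$, so only the objects $Z_{\rho,\Delta} = (m, u_{\rho,\Delta})$ survive. For these, $\dim_{\mathbb{C}}\Hom(m, Z_{\rho,\Delta}) = 1$, the induced spherical structure gives $\pdim(Z_{\rho,\Delta}) = \pdim_j(m) = \sqrt{|A|}$, and $\theta_{Z_{\rho,\Delta}} = \Delta$. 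Feeding this into~(\ref{eq:FS-ind-twist}) collapses the formula to $\nu_n(m) = \frac{1}{2\sqrt{|A|}} \sum_{\rho \in C(\chi)} \sum_{\Delta^2 = w(\rho)} \Delta^n$, where $w(\rho) := \tau \sum_{a \in A} \rho(a)$ is the quantity whose square roots index the admissible $\Delta$.

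Next I would carry out the inner sum over the two square roots of $w(\rho)$. Since each $\Delta = \theta_{Z_{\rho,\Delta}}$ is an eigenvalue of the twist of a fusion category, it is a root of unity; in particular $w(\rho) = \Delta^2 \neq 0$, so $\rho$ contributes exactly the two distinct objects $Z_{\rho,\pm\Delta}$. For $n$ odd, $\Delta^n + (-\Delta)^n = 0$, giving $\nu_n(m) = 0$. For $n = 2k$ even, $\Delta^{2k} + (-\Delta)^{2k} = 2\,w(\rho)^k$, so $\nu_n(m) = \frac{1}{\sqrt{|A|}} \sum_{\rho \in C(\chi)} w(\rho)^k$. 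Because $\tau$ is a real square root of $|A|^{-1}$ we have $\tau\sqrt{|A|} = \sgn(\tau)$, and since $\sum_{a} \rho(a) = \sqrt{|A|}\,\widehat{\rho}(1)$ this rewrites as $w(\rho) = \sgn(\tau)\,\widehat{\rho}(1)$. Hence $\nu_n(m) = \frac{\sgn(\tau)^k}{\sqrt{|A|}} \sum_{\rho \in C(\chi)} \widehat{\rho}(1)^k$.

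The crux of the argument, and the step I expect to be the main obstacle, is to convert this sum over the parametrizing set $C(\chi)$ into the sum $\sum_{a \in A} \widehat{\rho}(a)^k$ appearing in the statement, for a single fixed $\rho$. The idea is to exploit that $A^\vee$ acts freely and transitively on $C(\chi)$ together with the fact that $\chi^\natural: A \to A^\vee$ is an isomorphism (as $\chi$ is non-degenerate). Writing $\lambda_a := \chi^\natural(a^{-1})$, so that $\lambda_a(x) = \chi(x,a)^{-1}$, a short computation gives $\widehat{\lambda_a \rho}(1) = \widehat{\rho}(a)$, and as $a$ ranges over $A$ the translates $\lambda_a \rho$ range bijectively over $C(\chi)$. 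Matching the two index sets yields $\sum_{\rho' \in C(\chi)} \widehat{\rho'}(1)^k = \sum_{a \in A} \widehat{\rho}(a)^k$, which is the first equality of~(\ref{eq:fs-ind-ty-1}). This reindexing also makes transparent that the right-hand side is independent of the chosen $\rho$, as it must be.

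Finally, I would pass to the convolution form. Iterating the Fourier--convolution identity~(\ref{eq:fourier-convolution}) gives $\widehat{\rho^{*k}}(a) = |A|^{(k-1)/2}\,\widehat{\rho}(a)^k$, while summing any transform over $A$ and invoking the orthogonality relation~(\ref{eq:orth-rel}) in the non-degenerate case (where $\sum_a \chi(x,a)^{-1} = |A|\,\delta_{x,1}$) yields $\sum_{a} \widehat{g}(a) = \sqrt{|A|}\,g(1)$ for every $g \in L^2(A)$. Taking $g = \rho^{*k}$ and combining these gives $\sum_{a} \widehat{\rho}(a)^k = |A|^{1 - k/2}\,\rho^{*k}(1)$; substituting into the expression for $\nu_n(m)$ produces the second equality $\nu_n(m) = \sgn(\tau)^k |A|^{-\frac{1}{2}(k-1)}\,\rho^{*k}(1)$. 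This last paragraph is routine bookkeeping with the Fourier machinery of Section~\ref{sec:preliminaries}; the genuinely nontrivial point remains the reindexing of the previous step.
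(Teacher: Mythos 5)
Your proposal is correct and follows essentially the same route as the paper's proof: both apply the Ng--Schauenburg formula~(\ref{eq:FS-ind-twist}), observe that only the central objects $Z_{\rho,\Delta}$ contribute, reindex the resulting sum over $C(\chi)$ by $A$ using non-degeneracy of $\chi$ (your $\lambda_a\rho$ is exactly the paper's $\rho_a$), and then pass to the convolution form via~(\ref{eq:fourier-convolution}) and orthogonality. The only cosmetic difference is the odd case, which the paper dispatches by noting $\Hom(1, m^{\otimes n}) = 0$ while you use the cancellation $\Delta^n + (-\Delta)^n = 0$ over the two square roots; both work.
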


As remarked by Izumi in \cite{MR1832764},
\begin{equation}
  \label{eq:fourier-cochain}
  \widehat{\rho}(a) = \widehat{\rho}(1) \rho(a)^{-1} \quad (a \in A).
\end{equation}
Therefore (\ref{eq:fs-ind-ty-1}) can be rewritten in the following form:
\begin{equation*}
  \nu_{2k}(m) = \frac{\sgn(\tau)^k \, \widehat{\rho}(1)^k}{\sqrt{|A|}} \sum_{a \in A} \rho(a)^{-k}.
\end{equation*}
The formula of this form will be used for explicit computations in Section~\ref{sec:computation}.

\begin{proof}
  Provided that $n$ is odd, $\Hom(1, m^{\otimes n}) = 0$, and hence $\nu_n(m) = 0$. We suppose that $n = 2k$ is even. By~(\ref{eq:FS-ind-twist}) and the above list of simple objects, we have
  \begin{equation*}
    \nu_n(m)
    = \frac{1}{2|A|} \sum_{\varphi \in C(\chi)} \left( \tau \sum_{x \in A} \varphi(x) \right)^k \sqrt{|A|}
    = \frac{\sgn(\tau)^k}{\sqrt{|A|}} \sum_{\varphi \in C(\chi)} \widehat{\varphi}(1)^k.
  \end{equation*}
  Define $\rho_a: A \to \mathbb{C}$ by $\rho_a(x) = \rho(x) \chi(x, a)^{-1}$. Then the assignment $a \mapsto \rho_a$ gives a bijection between $A$ and $C(\chi)$ since $\chi$ is non-degenerate. If $\varphi = \rho_a$,
  \begin{equation*}
    \widehat{\varphi}(1) = \frac{1}{\sqrt{|A|}} \sum_{x \in A} \rho(x) \chi(x, a)^{-1} = \widehat{\rho}(a).
  \end{equation*}
  Thus the first equality of~(\ref{eq:fs-ind-ty-1}) follows. By the orthogonality relation of characters,
  \begin{equation*}
    \sum_{a \in A} \widehat{f}(a)
    = \frac{1}{\sqrt{|A|}} \sum_{a, x \in A} f(x) \chi(x, a)^{-1}
    = \sqrt{|A|} \, f(1) \quad (f \in L^2(A)).
  \end{equation*}
  By using this observation and~(\ref{eq:fourier-convolution}),
  \begin{equation*}
    \frac{\sgn(\tau)^k}{\sqrt{|A|}} \sum_{a \in A} \widehat{\rho}(a)^k
    = \frac{\sgn(\tau)^k}{|A|^{\frac{1}{2}k}} \sum_{a \in A} \mathcal{F}_\chi(\rho^{*k})(a)
    = \frac{\sgn(\tau)^k}{|A|^{\frac{1}{2}(k - 1)}} \rho^{*k}(1).
  \end{equation*}
  Thus the second equality of~(\ref{eq:fs-ind-ty-1}) follows.
\end{proof}

These formulae are useful for computations, however, it includes $\rho$ which does not appear in the definition of Tambara-Yamagami categories. There is a closed formula consisting only of $A$, $\chi$ and $\tau$, as follows:

\begin{theorem}
  \label{thm:fs-ind-ty-2}
  Suppose that $n = 2k$ is even. Then
  \begin{equation*}
    \nu_{n}(m) = \frac{\sgn(\tau)^k}{|A|^{\frac{1}{2}(k - 1)}}
    \sum_{a_1 \cdots a_k = 1} \, \prod_{1 \le i < j \le k} \chi(a_i, a_j)
  \end{equation*}
  where the sum runs through all $(a_1, \cdots, a_k) \in A^k$ such that $a_1 \cdots a_k = 1$.
\end{theorem}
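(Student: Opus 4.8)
The plan is to reduce everything to the second expression in Theorem~\ref{thm:fs-ind-1}, namely $\nu_{2k}(m) = \sgn(\tau)^k |A|^{-\frac{1}{2}(k-1)}\, \rho^{*k}(1)$ for a fixed $\rho \in C(\chi)$. Since the scalar prefactor already matches the target formula verbatim, the whole task collapses to identifying the iterated convolution $\rho^{*k}(1)$ with the character sum $\sum_{a_1\cdots a_k = 1}\prod_{1\le i<j\le k}\chi(a_i,a_j)$.

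First I would expand $\rho^{*k}(1)$ directly from the definition of the convolution product. Unwinding the $k$-fold convolution and evaluating at the unit, or equivalently arguing by induction on $k$, yields
\begin{equation*}
  \rho^{*k}(1) = \sum_{a_1 \cdots a_k = 1} \rho(a_1)\rho(a_2)\cdots\rho(a_k),
\end{equation*}
where the sum runs over all $(a_1,\dots,a_k)\in A^k$ with $a_1\cdots a_k = 1$.

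Next I would invoke the coboundary identity~(\ref{eq:coboundary-2}), which rewrites the product of $\rho$-values as $\rho(a_1)\cdots\rho(a_k) = \rho(a_1\cdots a_k)\prod_{1\le i<j\le k}\chi(a_i,a_j)$. On the summation set $a_1\cdots a_k = 1$ the leading factor is $\rho(1) = 1$, so each summand collapses to $\prod_{1\le i<j\le k}\chi(a_i,a_j)$. Substituting this back into the formula from Theorem~\ref{thm:fs-ind-1} produces precisely the claimed expression.

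There is no genuine obstacle here; the argument is a short computation once Theorem~\ref{thm:fs-ind-1} is in hand. The only points requiring a little care are the bookkeeping in the convolution expansion and the observation that the dependence on the chosen $\rho$ cancels on the constraint $a_1\cdots a_k = 1$. This cancellation is reassuring, since $\nu_{2k}(m)$ is an intrinsic invariant and must not depend on the auxiliary function $\rho \in C(\chi)$, whereas the intermediate expression in Theorem~\ref{thm:fs-ind-1} manifestly does.
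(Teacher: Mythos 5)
Your proof is correct and follows exactly the same route as the paper's: it starts from the second equality of Theorem~\ref{thm:fs-ind-1}, expands the iterated convolution $\rho^{*k}(1)$ as a sum over tuples with $a_1\cdots a_k = 1$, and applies the coboundary identity~(\ref{eq:coboundary-2}) together with $\rho(1)=1$ to eliminate $\rho$. In fact, you spell out the use of~(\ref{eq:coboundary-2}), which the paper's very terse proof leaves implicit, so nothing is missing.
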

\begin{proof}
  This follows immediately from the formula
  \begin{equation*}
    (f_1 * \cdots * f_k)(a) = \sum_{a_1 \cdots a_k = a} f_1(a_1) \cdots f_k(a_k) \quad (f_i \in L^2(A), a \in A)
  \end{equation*}
  and the second equality of Theorem~\ref{thm:fs-ind-ty-2}.
\end{proof}

In particular, $\nu_2(m)$ is the sign of $\tau$. By~(\ref{eq:fourier-trace}),
\begin{equation*}
  \nu_4(m) = \frac{1}{\sqrt{|A|}} \sum_{a \in A} \chi(a, a)^{-1} = \Trace(\mathcal{F}_\chi),
\end{equation*}
the trace of the Fourier transform associated with $\chi$. It is natural to ask what more higher indicators mean. The author expect that higher Frobenius-Schur indicators of $m$ can be interpreted in terms of Fourier analysis on finite groups in a natural way.

\subsection{Arithmetic properties}

Our formulae of indicators of $m \in \TY(A, \chi, \tau)$ are of complicated form. In this subsection, we study arithmetic properties of indicators of $m$ and show the following more familiar theorem.

For a finite group $G$ and an integer $n$, let $G[n] = \{ g \in G \mid g^n = 1 \}$.

\begin{theorem}
  \label{thm:fs-ind-arith}
  $\nu_{2k}(m) = \sqrt{|A[k]|} \cdot \xi$ for some $\xi \in \mu_8 \cup \{ 0 \}$.
  \begin{enumalph}
  \item Fix $\rho \in C(\chi)$. $\xi = 0$ if and only if there exists $a \in A[k]$ such that $\rho(a)^k \ne 1$.
  \item Suppose that $|A|$ is odd. Then
    \begin{equation*}
      \xi^2 =
      \begin{cases}
        +1 & \text{if $|A|^{k - 1}/|A[k]| \equiv 1 \pmod{4}$}, \\
        -1 & \text{otherwise}.
      \end{cases}
    \end{equation*}
  \end{enumalph}
\end{theorem}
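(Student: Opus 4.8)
The plan is to start from the rewritten form of Theorem~\ref{thm:fs-ind-1},
\[
  \nu_{2k}(m) = \frac{\sgn(\tau)^k\,\widehat{\rho}(1)^k}{\sqrt{|A|}} \sum_{a \in A} \rho(a)^{-k},
\]
and to analyze the Gauss sum $S := \sum_{a\in A}\rho(a)^{-k}$ together with its modulus. The key preliminary observation is that $\rho^k \in C(\chi^k)$, since applying $\partial$ gives $\partial(\rho^k) = (\partial\rho)^k = \chi^k$. Because $\chi$ is non-degenerate, the identity $\chi(a,b)^k = \chi(a^k,b)$ shows that $\Rad(\chi^k) = A[k]$ and, more strongly, that $\chi^k$ restricts trivially to $A[k]\times A$; hence $\rho^k|_{A[k]}$ is a genuine character of $A[k]$. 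I would first compute $|\nu_{2k}(m)|^2 = |S|^2/|A|$, expand $|S|^2 = \sum_{a,b}\rho^k(b)\rho^k(a)^{-1}$, substitute $a = bc$, and use the cocycle relation for $\rho^k$ to turn the inner sum into $\sum_b \chi(b,c^k)$, which by~(\ref{eq:orth-rel}) is $|A|$ for $c\in A[k]$ and $0$ otherwise. This collapses to $|\nu_{2k}(m)|^2 = \sum_{c\in A[k]}\rho(c)^{-k}$, and since $\rho^k|_{A[k]}$ is a character, orthogonality gives $|A[k]|$ when that character is trivial and $0$ otherwise. This establishes $\nu_{2k}(m) = \sqrt{|A[k]|}\,\xi$ with $|\xi|\in\{0,1\}$ and proves part~(a) at the same time.

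For $\xi\in\mu_8$ I would factor $S$ through its radical in the non-vanishing case. As $\rho^{-k}$ is constant on cosets of $A[k]$ (using $\chi^k(a,c)=1$ and $\rho(c)^{-k}=1$ for $c\in A[k]$), one obtains $S = |A[k]|\cdot\overline{S}$, where $\overline{S}$ is the Gauss sum of the non-degenerate quadratic form induced by $\rho^{-k}$ on $A/A[k] = A/\Rad(\chi^{-k})$. Reinserting this gives $\xi = \sgn(\tau)^k\,\widehat{\rho}(1)^k\,\omega$ with $\omega = \overline{S}/\sqrt{|A/A[k]|}$. Now $\widehat{\rho}(1)$ is the normalized Gauss sum of the non-degenerate form $\rho\in C(\chi)$, and $\omega$ is the normalized Gauss sum of a non-degenerate form on $A/A[k]$; both lie in $\mu_8$ by the classical Milgram formula, and since $\mu_8$ is closed under multiplication and $k$-th powers, $\xi\in\mu_8$.

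For part~(b) I would use that, for $|A|$ odd, $2$ is invertible modulo the exponent $N$, so $\rho_0(a) := \chi(a,a)^{-\overline{2}}$ (with $2\overline{2}\equiv 1 \pmod N$) is a well-defined element of $C(\chi)$. Since the formula is independent of the choice of representative, replacing $\rho$ by $\rho_0$ expresses both factors as Fourier traces: $\widehat{\rho_0}(1) = \Trace(\mathcal{F}_{\chi^{\overline{2}}})$ and $\tfrac{1}{\sqrt{|A|}}\sum_{a}\rho_0(a)^{-k} = \Trace(\mathcal{F}_{\chi^{-k\overline{2}}})$, so that
\[
  \xi^2 = \big(\Trace(\mathcal{F}_{\chi^{\overline{2}}})^2\big)^k \cdot \big(\Trace(\mathcal{F}_{\chi^{-k\overline{2}}})^2 / |A[k]|\big).
\]
The radicals are $\Rad(\chi^{\overline{2}}) = \{1\}$ and $\Rad(\chi^{-k\overline{2}}) = A[\gcd(k,|A|)] = A[k]$, so Corollary~\ref{cor:fourier-eigen-odd} evaluates each squared trace as $+1$ or $-1$ according to the residue of $|A|$ and of $|A|/|A[k]|$ modulo $4$. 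Writing $f(u) = (-1)^{(u-1)/2}$ for the non-trivial character of $(\mathbb{Z}/4\mathbb{Z})^\times$, which is multiplicative on odd integers, these combine into $\xi^2 = f(|A|)^k f(|A|/|A[k]|) = f(|A|^{k-1}/|A[k]|)$, which is exactly the asserted sign.

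The hard part will be the $\mu_8$ statement. Theorem~\ref{thm:fourier-eigen} only controls Gauss sums of the special even quadratic forms $a\mapsto\chi(a,a)^{-1}$, whose normalized values always lie in $\mu_4$; by contrast $\rho$ and $\rho^{-k}$ are quadratic forms attached to possibly non-square bicharacters, for which a primitive eighth root of unity can genuinely occur. Establishing this cleanly requires either invoking Milgram--Gauss reciprocity or reproving it through the Sylow decomposition $(A,\chi)\cong\prod_i(A_i,\chi_i)$, in which only the $2$-primary factor can contribute the extra eighth-root phase.
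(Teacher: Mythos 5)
Your treatment of the modulus and of parts (a) and (b) is correct, and it genuinely differs from the paper. The paper gets the dichotomy ``$\nu_{2k}(m)\in\sqrt{|A[k]|}\,\mu_\infty$ or $\nu_{2k}(m)=0$'' from Lemma~\ref{lem:fs-ind-arith}, whose proof invokes Vafa's theorem on twists via an auxiliary Tambara-Yamagami category, applied to the group $\mathcal{A}_k=\{(a_1,\dots,a_k):a_1\cdots a_k=1\}$ with the cochain $\rho_k$; your direct computation of $|S|^2$ by the substitution $a=bc$ and orthogonality is more elementary and yields part (a) at once (though note it gives only $|\xi|\in\{0,1\}$, not that $\xi$ is a root of unity). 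For part (b), your use of the two traces $\Trace(\mathcal{F}_{\chi^{\overline{2}}})$ and $\Trace(\mathcal{F}_{\chi^{-k\overline{2}}})$ on $A$ itself, combined with Corollary~\ref{cor:fourier-eigen-odd} and the multiplicativity of the quadratic character mod $4$, is a legitimate variant of the paper's argument, which instead builds the bicharacter $\chi'_k$ on $\mathcal{A}_k$ and applies the corollary there; your modulus computation supplies the fact that each trace is a Gaussian integer of modulus $\sqrt{|J|}$ times a unit, so the parity data pins it down to $\pm1$ or $\pm\sqrt{-1}$ exactly as in the paper.

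The gap is in the headline claim $\xi\in\mu_8$. Milgram's formula applies to \emph{quadratic forms}, i.e.\ functions $q$ with $\partial q=\chi$ \emph{and} $q(a^{-1})=q(a)$. A general element of $C(\chi)$ is not a quadratic form --- $C(\chi)$ is a torsor under all of $A^\vee$, while the quadratic refinements form a torsor under the $2$-torsion characters only --- and its normalized Gauss sum can lie outside $\mu_8$. Concretely, take $A=\mathbb{Z}_3$, $\chi(a,b)=\omega^{ab}$ with $\omega=e^{2\pi\sqrt{-1}/3}$, and $\rho(a)=\omega^{a^2+a}\in C(\chi)$; then
\begin{equation*}
  \widehat{\rho}(1)=\frac{1}{\sqrt{3}}\bigl(2+\omega^2\bigr)=e^{-\pi\sqrt{-1}/6},
\end{equation*}
a primitive twelfth root of unity. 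So the assertion that ``$\widehat{\rho}(1)$ and $\omega$ both lie in $\mu_8$ by the classical Milgram formula'' is false for the $\rho$ you fixed, and your closing remark that ``$\rho$ and $\rho^{-k}$ are quadratic forms'' is precisely the point that fails. Since $\nu_{2k}(m)$ is independent of the choice of $\rho\in C(\chi)$, the argument can be repaired by choosing $\rho$ to be an honest quadratic refinement of $\chi$; but the existence of such a refinement is itself a nontrivial fact (for odd $|A|$ it is easy, e.g.\ $\rho_0(a)=\chi(a,a)^{-\overline{2}}$ works and is even, but for the $2$-primary part it requires the classification of non-degenerate symmetric bicharacters on abelian $2$-groups or an equivalent argument), and you would still be importing Milgram's theorem as a black box.

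Note also that the paper's alternative mechanism for the $2$-part is not available to you as things stand: it first upgrades $|\xi|=1$ to $\xi\in\mu_\infty$ (Proposition~\ref{prop:fs-ind-arith}, via Vafa), and only then runs the Galois argument --- $\Xi_k$ is an isometry invariant and ${}^{\sigma\sigma}\chi$ is isometric to $\chi$ (Lemma~\ref{lem:bichar-gal}), so ${\rm Gal}(\mathbb{Q}(\xi)/\mathbb{Q})\cong\mathbb{Z}_{2^r}^\times$ has exponent at most $2$, forcing $r\le 3$. Your modulus computation alone does not show $\xi$ is a root of unity (a modulus-one cyclotomic number need not be one; one would need a Kronecker-type argument on all Galois conjugates), so either you prove the existence of a quadratic refinement and cite Milgram, or you supply the root-of-unity step and then follow the paper's Galois route.
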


Fix $k \ge 1$. We note that the following two conditions are equivalent:
\begin{enumerate}
\item There exist $\rho \in C(\chi)$ and $a \in A[k]$ such that $\rho(a)^k \ne 1$.
\item There exists $a \in A[k]$ such that $\rho(a)^k \ne 1$ for all $\rho \in C(\chi)$.
\end{enumerate}
In fact, since the character group $A^\vee$ acts transitively on $C(\chi)$ by the multiplication of functions on $A$, $\rho(a)^k = \rho'(a)^k$ for all $\rho, \rho' \in C(\chi)$ and $a \in A[k]$.

For a while, we fix a finite abelian group $B$ and a symmetric bicharacter $\beta$ of $B$. Let $\rho \in C(\beta)$. If $r \in \Rad(\beta)$ and $b \in B$, then $\rho(r b) = \rho(r) \rho(b)$. We remark that, in particular, the restriction of $\rho$ to $\Rad(\beta)$ is a character of $\Rad(\beta)$.

\begin{lemma}
  \label{lem:fs-ind-arith}
  Notations are as above. Let $J$ denote the radical of $\beta$.
  \begin{enumalph}
  \item Suppose that $\rho|_J$ is trivial. Then $\mathcal{F}_\beta(\rho)$ takes values in $\sqrt{|J|} \cdot \mu_\infty$.
  \item Otherwise $\mathcal{F}_\beta(\rho)$ is identically zero.
  \end{enumalph}
\end{lemma}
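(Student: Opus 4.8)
The plan is to reduce the whole statement to the non-degenerate situation on the quotient $B/J$ by means of Lemma~\ref{lem:fourier-factor}, which gives $\mathcal{F}_\beta = \sqrt{|J|}\,\mathcal{F}_{\overline{\beta}}\,P_J$. First I would evaluate $P_J\rho$. Rewriting the coboundary relation~(\ref{eq:coboundary}) as $\rho(ax)=\rho(a)\rho(x)\beta(a,x)^{-1}$ and using $\beta(a,x)=1$ for $x\in J=\Rad(\beta)$, one gets $\rho(ax)=\rho(a)\rho(x)$ for all $a\in B$, $x\in J$; recall (as noted just before the statement) that $\rho|_J$ is then a character of $J$. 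Hence $P_J\rho(a)=\rho(a)\cdot\frac{1}{|J|}\sum_{x\in J}\rho(x)$. If $\rho|_J$ is non-trivial the inner sum vanishes by orthogonality of characters, so $P_J\rho=0$ and therefore $\mathcal{F}_\beta(\rho)=0$, which is part (b). If $\rho|_J$ is trivial the inner sum equals $1$, and $P_J\rho$ is the function $\overline{\rho}$ on $B/J$ induced by $\rho$, which lies in $C(\overline{\beta})$ since $\partial\overline{\rho}=\overline{\beta}$. Thus $\mathcal{F}_\beta(\rho)=\sqrt{|J|}\,\mathcal{F}_{\overline{\beta}}(\overline{\rho})$, and part (a) is reduced to proving that $\mathcal{F}_{\overline{\beta}}(\overline{\rho})$ takes values in $\mu_\infty$.

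Since $\overline{\beta}$ is non-degenerate, I would next invoke the cochain identity~(\ref{eq:fourier-cochain}) to write $\mathcal{F}_{\overline{\beta}}(\overline{\rho})(a)=\widehat{\overline{\rho}}(1)\,\overline{\rho}(a)^{-1}$. As $\overline{\rho}$ takes values in $\mu_\infty$, it suffices to show that the normalized Gauss sum $\widehat{\overline{\rho}}(1)=|B/J|^{-1/2}\sum_{\overline{g}}\overline{\rho}(\overline{g})$ is a root of unity. Its modulus is settled immediately: expanding $|\widehat{\overline{\rho}}(1)|^2$, substituting $\overline{g}=\overline{h}\,\overline{t}$, and applying the coboundary relation to get $\overline{\rho}(\overline{g})\,\overline{\rho}(\overline{h})^{-1}=\overline{\beta}(\overline{h},\overline{t})^{-1}\overline{\rho}(\overline{t})$, the orthogonality relation~(\ref{eq:orth-rel}) for the non-degenerate $\overline{\beta}$ collapses the sum over $\overline{h}$ and yields $|\widehat{\overline{\rho}}(1)|^2=\overline{\rho}(1)=1$. (Equivalently, one checks that $\mathcal{F}_{\overline{\beta}}$ is unitary and applies Parseval, since all values of $\mathcal{F}_{\overline{\beta}}(\overline{\rho})$ have equal modulus.)

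The remaining and genuinely harder point is that a modulus-one algebraic number of this shape is actually a root of unity, and here my plan is structural. A direct calculation, completing the square via the coboundary relation, shows that replacing $\overline{\rho}$ by another element $\overline{\beta}(\overline{c},-)\cdot\overline{\rho}$ of $C(\overline{\beta})$ multiplies the Gauss sum by the root of unity $\overline{\rho}(\overline{c}^{-1})^{-1}$; hence the normalized Gauss sum is independent of the choice of $\overline{\rho}\in C(\overline{\beta})$ up to a factor in $\mu_\infty$, and I am free to pick a convenient representative. Decomposing the non-degenerate pseudo-metric group $(B/J,\overline{\beta})$ into cyclic orthogonal summands (via the $p$-part decomposition recalled in Section~\ref{sec:preliminaries} together with the classification of non-degenerate symmetric bicharacters), I would choose $\overline{\rho}$ to factor as a product of cochains on the cyclic pieces, so that the Gauss sum factors as a product of classical one-dimensional quadratic Gauss sums, each of which is $\sqrt{n_i}$ times an element of $\mu_\infty$ (in fact of $\mu_8$) by the classical evaluation. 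This gives $\widehat{\overline{\rho}}(1)\in\mu_\infty$ and completes part (a). I expect the main obstacle to be precisely this last step: arranging the reduction to cyclic summands so that $\overline{\rho}$ factors compatibly, since $\overline{\rho}$ is in general not an even quadratic form, so one must carry along the character-shift ambiguity carefully rather than simply completing the square once.
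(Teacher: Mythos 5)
Your first paragraph reproduces the paper's own reduction almost verbatim: the paper also factors $\mathcal{F}_\beta=\sqrt{|J|}\,\mathcal{F}_{\overline{\beta}}\,P_J$ via Lemma~\ref{lem:fourier-factor}, kills the degenerate case (b) by orthogonality of $\rho|_J$, observes $\overline{\rho}\in C(\overline{\beta})$ when $\rho|_J$ is trivial, and uses~(\ref{eq:fourier-cochain}) to reduce (a) to showing that the single number $\widehat{\overline{\rho}}(1)$ is a root of unity. The divergence is in how that last point is proved. The paper's argument is categorical: it forms the auxiliary Tambara--Yamagami category $\TY(B/J,\overline{\beta},|B/J|^{-1/2})$, locates the simple object $Z_{\overline{\rho},\Delta}$ in its Drinfeld center whose canonical twist is $\Delta$ with $\Delta^2=\widehat{\overline{\rho}}(1)$, and invokes Vafa's theorem (finiteness of the order of twists in a modular tensor category). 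Your route is arithmetic: reduce, via the classification of non-degenerate symmetric bicharacters, to classical quadratic Gauss sums on cyclic groups. Your character-shift computation (the Gauss sum of $\overline{\beta}(\overline{c},-)\cdot\overline{\rho}$ equals that of $\overline{\rho}$ times $\overline{\rho}(\overline{c}^{-1})^{-1}$) and your modulus-one computation are both correct.

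There is, however, a genuine gap in the arithmetic route as you state it: a non-degenerate symmetric bicharacter on a finite abelian $2$-group need \emph{not} decompose into orthogonal cyclic summands. A counterexample appears in Section~\ref{sec:computation} of this very paper: $(\mathbb{F}_2^2,\chi_{\rm alt})$ is alternating, i.e.\ $\chi_{\rm alt}(a,a)=1$ for all $a$, whereas the only non-degenerate symmetric bicharacter on $\mathbb{Z}_2$ is $(a,b)\mapsto(-1)^{ab}$, so any orthogonal sum of cyclic pieces has $\chi(a,a)\ne 1$ for some $a$; hence no cyclic decomposition of $\chi_{\rm alt}$ exists. (For odd order your decomposition does exist, so your argument is complete there; this is also where the paper's own later lemma on $\Xi_k$ for odd $|A|$ works by elementary means.) The classification you need (Wall, Kawauchi--Kojima) requires, besides cyclic summands, indecomposable rank-two forms on $\mathbb{Z}_{2^k}\times\mathbb{Z}_{2^k}$, and your argument must include these and evaluate their Gauss sums separately --- this is doable, e.g.\ for $\chi((a_1,a_2),(b_1,b_2))=\zeta^{a_1b_2+a_2b_1}$ with $\zeta$ a primitive $2^k$-th root of unity, the cochain $\rho(a_1,a_2)=\zeta^{-a_1a_2}$ lies in $C(\chi)$ and $\sum\rho=2^k$, so the normalized sum is $1$. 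Note also that the obstacle you flag at the end (arranging $\overline{\rho}$ to factor through the decomposition) is not actually an obstacle: any product of cochains chosen on the orthogonal factors lies in $C(\overline{\beta})$, and by your own shift lemma its Gauss sum agrees with that of $\overline{\rho}$ up to a root of unity. The real defect is the nonexistence of the cyclic decomposition; once repaired with the full $2$-group classification your proof closes, but it then rests on substantially heavier arithmetic input than the paper's one-line appeal to Vafa's theorem, which sidesteps all classification of forms.
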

\begin{proof}
  We write $\mathcal{F}_\beta(\rho)$ by $\widehat{\rho}$ during the proof. Our proof is divided into two steps. We first prove the claim under the assumption that $\beta$ is non-degenerate.

  {\bf Step 1}. Suppose that $\beta$ is non-degenerate. Then $J$ is trivial. Recall that $\rho$ takes values in $\mu_\infty$. In view of~(\ref{eq:fourier-cochain}), it suffices to show that $\widehat{\rho}(1) \in \mu_\infty$. This can be proved by using Vafa's theorem (\cite{MR944264}, see also \cite[Theorem~3.1.19]{MR1797619}) which states that the order of the twist of a modular tensor category is finite.

  Consider the Tambara-Yamagami category $\mathcal{C} = \TY(B, \beta, |B|^{-1/2})$. Fix a square root $\Delta$ of $\widehat{\rho}(1) = |B|^{-1/2} \sum_{b \in B} \rho(b)$. Following the list of isomorphism classes of simple objects of $\mathcal{Z}(\mathcal{C})$, we obtain a simple object $Z \in \mathcal{Z}(\mathcal{C})$ such that $\theta_Z = \Delta$, where $\theta$ is the canonical twist of $\mathcal{Z}(\mathcal{C})$. It follows from Vafa's theorem that $\theta_Z$ is a root of unity, and hence so is $\widehat{\rho}(1) = \theta_Z^2$.

  {\bf Step 2}. We now consider the general case. Let $\overline{\beta}$ be the non-degenerate symmetric bicharacter of $B/J$ induced from $\beta$. By Lemma~\ref{lem:fourier-factor},
  \begin{equation*}
    \widehat{\rho} = \sqrt{|J|} \mathcal{F}_{\overline{\beta}}^{}(\overline{\rho})
    \text{\quad where \quad}
    \overline{\rho}(b) = \frac{1}{|J|} \sum_{r \in J} \rho(r b) \quad (b \in B).
  \end{equation*}

  {\rm (a)} Suppose that $\rho|_J$ is trivial. If we regard $\overline{\rho}$ as a function on $B/J$, then it is clear that $\overline{\rho} \in C(\overline{\beta})$. By Step~1, we have that $\mathcal{F}_{\overline{\beta}}^{}(\overline{\rho})$ takes values in $\mu_\infty$. Hence the result follows.

  {\rm (b)} Otherwise, if $\rho|_J$ is not trivial, it follows from the orthogonality relation of characters that $\overline{\rho}$ is identically zero. Thus also $\widehat{\rho}$ is identically zero.
\end{proof}

To apply the above lemma, we introduce some notations. Fix $k \ge 1$. Let
\begin{equation*}
  \mathcal{A}_k = \{ (a_1, \cdots, a_k) \in A^k \mid a_1 \cdots a_k = 1 \}.
\end{equation*}
$\mathcal{A}_k$ is a subgroup of $A^k$. Fix $\rho \in C(\chi)$ and define $\rho_k: \mathcal{A}_k \to \mathbb{C}^\times$ by
\begin{equation*}
  \rho_k(a_1, \cdots, a_k) = \rho(a_1) \cdots \rho(a_k) \quad (a_1, \cdots, a_k \in A)
\end{equation*}
Then $\chi_k := \partial \rho_k$ is a symmetric bicharacter of $\mathcal{A}_k$. $\chi_k$ is not necessarily non-de\-gen\-er\-ate. Let $J_k$ denote the radical of $\chi_k$. By direct computation, we have
\begin{equation*}
  J_k = \{ (a, a, \cdots, a) \in A^k \mid a^k = 1 \}.
\end{equation*}

Now we can prove the following weaker version of Theorem~\ref{thm:fs-ind-arith}.

\begin{proposition}
  \label{prop:fs-ind-arith}
  $\nu_{2k}(m) = \sqrt{|A[k]|} \cdot \xi$ for some $\xi \in \mu_{\infty} \cup \{ 0 \}$. $\xi = 0$ if and only if there exists $a \in A[k]$ such that $\rho(a)^k \ne 1$.
\end{proposition}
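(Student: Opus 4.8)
The plan is to recognize the sum appearing in Theorem~\ref{thm:fs-ind-ty-2} as the value at the identity of the Fourier transform of $\rho_k$ associated with $\chi_k$, and then feed this into Lemma~\ref{lem:fs-ind-arith}. First I would use the cochain identity~\eqref{eq:coboundary-2}: for $(a_1, \dots, a_k) \in \mathcal{A}_k$ we have $a_1 \cdots a_k = 1$ and $\rho(1) = 1$, so
\begin{equation*}
  \rho_k(a_1, \dots, a_k) = \rho(a_1) \cdots \rho(a_k) = \prod_{1 \le i < j \le k} \chi(a_i, a_j).
\end{equation*}
Hence the sum in Theorem~\ref{thm:fs-ind-ty-2} is exactly $\sum_{x \in \mathcal{A}_k} \rho_k(x)$. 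Since $\chi_k(x, 1) = 1$ for the identity $1 = (1, \dots, 1) \in \mathcal{A}_k$, the defining formula of $\mathcal{F}_{\chi_k}$ gives $\sum_{x \in \mathcal{A}_k} \rho_k(x) = \sqrt{|\mathcal{A}_k|}\, \mathcal{F}_{\chi_k}(\rho_k)(1)$.

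Next I would compute $|\mathcal{A}_k|$. As $\mathcal{A}_k$ is the kernel of the surjection $A^k \to A$, $(a_1, \dots, a_k) \mapsto a_1 \cdots a_k$, we have $|\mathcal{A}_k| = |A|^{k-1}$. Combining this with the prefactor in Theorem~\ref{thm:fs-ind-ty-2}, the powers of $|A|$ cancel precisely and yield
\begin{equation*}
  \nu_{2k}(m) = \sgn(\tau)^k \, \mathcal{F}_{\chi_k}(\rho_k)(1).
\end{equation*}
Now $\rho_k \in C(\chi_k)$ by the very definition $\chi_k = \partial \rho_k$, so Lemma~\ref{lem:fs-ind-arith} applies to $(\mathcal{A}_k, \chi_k, \rho_k)$ with radical $J_k$. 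If $\rho_k|_{J_k}$ is trivial, then $\mathcal{F}_{\chi_k}(\rho_k)(1) \in \sqrt{|J_k|} \cdot \mu_\infty$; otherwise it is zero. Since $|J_k| = |A[k]|$ (the assignment $a \mapsto (a, \dots, a)$ identifies $J_k$ with $A[k]$), this produces $\nu_{2k}(m) = \sqrt{|A[k]|} \cdot \xi$ with $\xi = \sgn(\tau)^k \cdot (\text{a root of unity}) \in \mu_\infty$ in the first case and $\xi = 0$ in the second.

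Finally I would translate the triviality of $\rho_k|_{J_k}$ into the stated criterion. Using $J_k = \{ (a, \dots, a) \mid a \in A[k] \}$ and $\rho_k(a, \dots, a) = \rho(a)^k$, the restriction $\rho_k|_{J_k}$ is trivial if and only if $\rho(a)^k = 1$ for every $a \in A[k]$; equivalently, $\xi = 0$ precisely when some $a \in A[k]$ has $\rho(a)^k \ne 1$, which is the assertion. I expect the only genuine work to lie in the bookkeeping of the first two steps—correctly matching the bicharacter $\chi_k$ and the group $\mathcal{A}_k$ so that the sum becomes a Fourier coefficient at the identity—since the arithmetic heart (that the nonzero values are $\sqrt{|J|}$ times roots of unity, via Vafa's theorem) is already packaged in Lemma~\ref{lem:fs-ind-arith}.
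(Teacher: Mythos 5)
Your proposal is correct and follows essentially the same route as the paper: both reduce $\nu_{2k}(m)$ to $\sgn(\tau)^k\,\mathcal{F}_{\chi_k}(\rho_k)(1)$ and then invoke Lemma~\ref{lem:fs-ind-arith} together with the identification $J_k \cong A[k]$. The only cosmetic difference is that you start from Theorem~\ref{thm:fs-ind-ty-2} and use the coboundary identity~\eqref{eq:coboundary-2}, while the paper reaches the same sum $\sum_{a \in \mathcal{A}_k}\rho_k(a)$ directly from the convolution form $\rho^{*k}(1)$ in Theorem~\ref{thm:fs-ind-1}.
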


In particular, $\nu_{2k}(m) \ne 0$ if $A$ is odd.

\begin{proof}
  $\rho_k \in C(\chi_k)$ by definition. By Theorem~\ref{thm:fs-ind-1}, we have
  \begin{equation*}
    \nu_{2k}(m) = \frac{\sgn(\tau)^k}{\sqrt{|\mathcal{A}_k|}} \sum_{a \in \mathcal{A}_k} \rho_k(a) = \sgn(\tau)^k \cdot \mathcal{F}_{\chi_k}^{}(\rho_k)(1).
  \end{equation*}
  Now we can apply Lemma~\ref{lem:fs-ind-arith} to the right-hand side. Since $|J_k| = |A[k]|$, we have that $\nu_{2k}(m) = \sqrt{|A[k]|} \cdot \xi$ for some $\xi \in \mu_\infty \cup \{ 0 \}$. This $\xi$ is zero if and only if the restriction of $\rho_k$ to $J_k$ is not trivial. This condition is equivalent to that there exists an element $a \in A[k]$ such that $\rho(a)^k \ne 1$.
\end{proof}

To investigate the order of the above $\xi$, we introduce the following notation: For a finite abelian group $B$ and a symmetric bicharacter $\beta$ of $B$, we write
\begin{equation*}
  \Xi_k(B, \beta) = \frac{1}{|B|^{\frac{1}{2}(k - 1)} \cdot |B[k]|^{\frac{1}{2}}} \sum_{b_1 \cdots b_k = 1} \prod_{1 \le i < j \le k} \beta(b_i, b_j).
\end{equation*}
By Theorem~\ref{thm:fs-ind-ty-2},
\begin{equation*}
  \nu_{2k}(m) = \sgn(\tau)^k \sqrt{|A[k]|} \cdot \Xi_k(A, \chi).
\end{equation*}
Proposition~\ref{prop:fs-ind-arith} states that, if $\beta$ is non-degenerate, then $\Xi_k(B, \beta) \in \mu_{\infty} \cup \{ 0 \}$. Note that $\Xi_k(B, \beta)$ is an isometry invariant of $(B, \beta)$. Moreover, it is multiplicative in the following sense: If $(B, \beta)$ is isometric to $(B_1, \beta_1) \times (B_2, \beta_2)$, we have
\begin{equation*}
  \Xi_k(B, \beta) = \Xi_k(B_1, \beta_1) \cdot \Xi_k(B_2, \beta_2).
\end{equation*}

We first deal with the case where $|A|$ is odd. We can prove the following lemma by using results on the trace of the Fourier transform.

\begin{lemma}
  Suppose that $|A|$ is odd. Then
  \begin{equation*}
    \Xi_k(A, \chi)^2 =
    \begin{cases}
      +1 & \text{if $|A|^{k-1} / |A[k]| \equiv 1 \pmod{4}$}, \\
      -1 & \text{otherwise}.
    \end{cases}
  \end{equation*}
\end{lemma}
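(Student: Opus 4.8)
The plan is to reduce $\Xi_k(A,\chi)$ to the normalized trace of a Fourier transform and then read off $\Xi_k(A,\chi)^2$ directly from Theorem~\ref{thm:fourier-eigen} and Corollary~\ref{cor:fourier-eigen-odd}. First I would recall from the proof of Proposition~\ref{prop:fs-ind-arith} that $\nu_{2k}(m) = \sgn(\tau)^k\,\mathcal{F}_{\chi_k}(\rho_k)(1)$, so that
\[
  \Xi_k(A,\chi) = \frac{1}{\sqrt{|J_k|}}\,\mathcal{F}_{\chi_k}(\rho_k)(1) = \frac{1}{\sqrt{|\mathcal{A}_k|}\,\sqrt{|J_k|}}\sum_{a\in\mathcal{A}_k}\rho_k(a),
\]
using $|J_k|=|A[k]|$ and $\chi_k(a,1)=1$. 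Since $|A|$ is odd, the group $\mathcal{A}_k$ has odd order $|A|^{k-1}$, and by Proposition~\ref{prop:fs-ind-arith} the left-hand side already lies in $\mu_\infty$; in particular $|\Xi_k(A,\chi)|=1$, a modulus constraint I will exploit at the end.

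The hard part is to recognize $\sum_{a\in\mathcal{A}_k}\rho_k(a)$ as a genuine Fourier trace $\sqrt{|\mathcal{A}_k|}\,\Trace(\mathcal{F}_\psi)$ for a suitable symmetric bicharacter $\psi$ of $\mathcal{A}_k$. The key point is that on $\mathcal{A}_k$ the explicit formula $\rho_k(a_1,\dots,a_k)=\prod_{i<j}\chi(a_i,a_j)$ coming from~(\ref{eq:coboundary-2}) yields $\rho_k(a)^2 = \chi_k(a,a)^{-1}$, by a one-line computation of $\chi_k(a,a)=\rho_k(a)^2\rho_k(a^2)^{-1}$ together with $\rho_k(a^2)=\prod_{i<j}\chi(a_i,a_j)^4$. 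As $\mathcal{A}_k$ has odd order, I would fix an integer $t$ with $2t\equiv 1$ modulo the exponent of $\mathcal{A}_k$ and put $\psi(a,b):=\chi_k(a,b)^{t}$. Then $\psi$ is a symmetric bicharacter with $\Rad(\psi)=\Rad(\chi_k)=J_k$ (because $t$ is prime to the exponent), and, since $\rho_k$ takes values in roots of unity of odd order, $\rho_k(a)$ is the unique such square root of $\chi_k(a,a)^{-1}$, namely $\chi_k(a,a)^{-t}=\psi(a,a)^{-1}$. Consequently $\mathcal{F}_{\chi_k}(\rho_k)(1)=\Trace(\mathcal{F}_\psi)$ by~(\ref{eq:fourier-trace}), whence $\Xi_k(A,\chi)=\Trace(\mathcal{F}_\psi)/\sqrt{|J_k|}$. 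This identification is the crux; everything afterwards is a direct appeal to Section~\ref{sec:preliminaries}.

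Finally I would apply Theorem~\ref{thm:fourier-eigen} to $\psi$: as $\Rad(\psi)=J_k$, it gives $\Trace(\mathcal{F}_\psi)=\sqrt{|J_k|}\,(n_++n_-\sqrt{-1})$ with $n_\pm\in\mathbb{Z}$, so $\Xi_k(A,\chi)=n_++n_-\sqrt{-1}$. Combined with $|\Xi_k(A,\chi)|=1$ this forces $n_+^2+n_-^2=1$, hence $(n_+,n_-)\in\{(\pm1,0),(0,\pm1)\}$ and $\Xi_k(A,\chi)^2=n_+^2-n_-^2=\pm1$. To pin down the sign I would invoke Corollary~\ref{cor:fourier-eigen-odd} for $\psi$ on the odd group $\mathcal{A}_k$, using $|\mathcal{A}_k/J_k|=|A|^{k-1}/|A[k]|$: exactly one of $n_\pm$ is odd, and it is $n_+$ precisely when $|A|^{k-1}/|A[k]|\equiv 1\pmod 4$. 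In that case $n_-=0$ and $\Xi_k(A,\chi)^2=+1$; otherwise $n_+=0$ and $\Xi_k(A,\chi)^2=-1$, which is exactly the asserted dichotomy.
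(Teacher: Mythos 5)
Your proof is correct and takes essentially the same route as the paper: both arguments reduce $\sqrt{|A[k]|}\,\Xi_k(A,\chi)$ to the trace of a Fourier transform $\mathcal{F}_\beta$ for an auxiliary symmetric bicharacter $\beta$ on $\mathcal{A}_k$ with $\beta(a,a)^{-1}=\rho_k(a)$ and $\Rad(\beta)=J_k$, and then conclude by combining Theorem~\ref{thm:fourier-eigen}, Corollary~\ref{cor:fourier-eigen-odd} (via $|\mathcal{A}_k/J_k|=|A|^{k-1}/|A[k]|$) with the fact that $\Xi_k(A,\chi)$ is a nonzero root of unity. Indeed, your $\psi=\chi_k^{\,t}$ (with $2t\equiv 1$ modulo the exponent) agrees with the paper's bicharacter $\chi_k'(a,b)=\prod_{i<j}\chi(a_i,b_j)^h\chi(a_j,b_i)^h$, $h=(N-1)/2$, since on $\mathcal{A}_k$ one has $\chi_k(a,b)=\prod_{i<j}\chi(a_i,b_j)^{-1}\chi(a_j,b_i)^{-1}$ and $-t\equiv h$.
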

\begin{proof}
  $\mathcal{A}_k$, $J_k$ and $\rho_k$ have meanings as in the proof of Proposition~\ref{prop:fs-ind-arith}. Let $N$ be the exponent of $A$. Since $|A|$ is odd, so is $N$. Let $h = (N - 1)/2 \in \mathbb{Z}$. Define a symmetric bi\-char\-ac\-ter $\chi'_k: \mathcal{A}_k \times \mathcal{A}_k \to \mathbb{C}^\times$ by
  \begin{equation*}
    \chi'_k(a_1, \cdots, a_k; b_1, \cdots, b_k) = \prod_{1 \le i < j \le k} \chi(a_i, b_j)^h \chi(a_j, b_i)^h
    \quad (a_i, b_j \in A).
  \end{equation*}
  Then $\chi_k'(a, a)^{-1} = \rho_k(a)$ for all $a \in \mathcal{A}_k$. By Theorem~\ref{thm:fs-ind-1}, we have
  \begin{equation*}
    \sqrt{|A[k]|} \cdot \Xi_k(A, \chi)
    = \frac{1}{\sqrt{|\mathcal{A}_k|}} \sum_{a \in \mathcal{A}_k} \chi_k'(a, a)^{-1}
    = \Trace(\mathcal{F}_{\chi'_k}).
  \end{equation*}
  Note that $\Rad(\chi_k') = J_k \cong A[k]$. By Theorem~\ref{thm:fourier-eigen} states that
  \begin{equation*}
    \Trace(\mathcal{F}_{\chi'_k}) = \sqrt{|A[k]|} \cdot (r + s \sqrt{-1})
  \end{equation*}
  for some $r, s \in \mathbb{Z}$. Hence, we have $\Xi_k(A, \chi) = r + s \sqrt{-1}$. On the other hand, we know that $\Xi_k(A, \chi)$ is a root of unity. Therefore, $\Xi_k(A, \chi)$ is one of $\pm 1, \pm \sqrt{-1}$.

  Note that $|\mathcal{A}_k/J_k| = |A|^{k-1}/|A[k]|$. We can know parities of $r$ and $s$ by Corollary~\ref{cor:fourier-eigen-odd}. If $|A|^{k-1}/|A[k]| \equiv 1 \pmod{4}$, then $r$ is odd, and hence $\Xi_k(A, \chi)$ must be $\pm 1$. Otherwise, $s$ is odd, and hence $\Xi_k(A, \chi)$ must be $\pm \sqrt{-1}$.
\end{proof}

The definition of $\chi_k'$ in the above proof seems to be technical. Moreover, $\chi_k'$ cannot be defined if $|A|$ is even. In the case where $|A|$ is a 2-group, we can prove the following lemma by using Galois theory.

\begin{lemma}
  Suppose that $|A|$ is a 2-group. Then $\Xi_k(A, \chi) \in \mu_8 \cup \{ 0 \}$.
\end{lemma}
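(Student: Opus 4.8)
The plan is to use the fact, already recorded in Proposition~\ref{prop:fs-ind-arith}, that $\Xi_k(A,\chi)\in\mu_\infty\cup\{0\}$; if it vanishes there is nothing to prove, so I assume $\Xi_k(A,\chi)$ is a genuine root of unity and bound its order. Write $S=\sum_{a_1\cdots a_k=1}\prod_{1\le i<j\le k}\chi(a_i,a_j)$, so that $\Xi_k(A,\chi)=c\cdot S$ with $c=|A|^{-\frac12(k-1)}\,|A[k]|^{-\frac12}$. Since $A$ is a $2$-group, its exponent is $N=2^t$ and $\chi$ takes values in $\mu_N$; hence $S\in\mathbb{Q}(\zeta)$ for a primitive $N$-th root of unity $\zeta$. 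Moreover $c$ is a rational multiple of a power of $\sqrt2$, so $c\in\mathbb{Q}(\sqrt2)\subseteq\mathbb{Q}(\zeta_8)$. Thus $\Xi_k(A,\chi)\in\mathbb{Q}(\zeta)$, and the whole problem reduces to showing that $\Xi_k(A,\chi)$ in fact lies in $\mathbb{Q}(\zeta_8)=\mathbb{Q}(\sqrt2,\sqrt{-1})$, because the roots of unity contained in that field are exactly $\mu_8$.

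First I would analyze the action of $\mathrm{Gal}(\mathbb{C}/\mathbb{Q})$ on the bare character sum $S$. For $\sigma\in\mathrm{Gal}(\mathbb{C}/\mathbb{Q})$ one has $\sigma(S)=\sum_{a_1\cdots a_k=1}\prod_{i<j}{}^{\sigma}\chi(a_i,a_j)$, i.e. $\sigma$ carries the sum attached to $\chi$ to the sum attached to ${}^{\sigma}\chi$. Applying $\sigma$ twice and invoking Lemma~\ref{lem:bichar-gal}, which asserts that ${}^{\sigma\sigma}\chi$ is isometric to $\chi$, together with the fact that this character sum is an isometry invariant (as is $\Xi_k$), I obtain $\sigma^2(S)=S$. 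Consequently $S$ is fixed by every square of an element of $\mathrm{Gal}(\mathbb{Q}(\zeta)/\mathbb{Q})\cong(\mathbb{Z}/N)^\times$; in other words, $S$ lies in the fixed field of the subgroup of squares of $(\mathbb{Z}/2^t)^\times$.

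It then remains to identify this fixed field. Since $(\mathbb{Z}/2^t)^\times\cong\langle-1\rangle\times\langle5\rangle$ with $5$ of order $2^{t-2}$, its subgroup of squares is $\langle 5^2\rangle$, which is precisely the kernel of reduction modulo $8$; hence the corresponding fixed field is $\mathbb{Q}(\zeta_8)$, and $(\mathbb{Z}/2^t)^\times$ modulo squares is an elementary abelian $2$-group. Therefore $S\in\mathbb{Q}(\zeta_8)$, so $\Xi_k(A,\chi)=cS\in\mathbb{Q}(\zeta_8)$, and being a root of unity it must lie in $\mu_8$. The main obstacle I anticipate is the bookkeeping in this last step: verifying cleanly that the subgroup of squares of $(\mathbb{Z}/2^t)^\times$ corresponds under the Galois correspondence to $\mathbb{Q}(\zeta_8)$, handling the small cases $t\le 3$ (where $\mathbb{Q}(\zeta)\subseteq\mathbb{Q}(\zeta_8)$ already and the Galois reduction is vacuous), and making sure the isometry invariance is applied to $S$ rather than to $\Xi_k$ itself, so as to avoid spurious sign factors arising from $\sigma(\sqrt2)=\pm\sqrt2$.
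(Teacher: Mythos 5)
Your proposal is correct and is essentially the paper's own argument: both rest on Lemma~\ref{lem:bichar-gal} together with isometry invariance to obtain $\sigma^2$-invariance, and then on the structure of $2$-power cyclotomic Galois groups, the only difference being bookkeeping --- the paper applies $\sigma^2$ directly to $\xi = \Xi_k(A,\chi)$ and bounds its order $2^r$ by noting that ${\rm Gal}(\mathbb{Q}(\xi)/\mathbb{Q}) \cong \mathbb{Z}_{2^r}^\times$ must have exponent at most two, whereas you place the bare sum $S$ in the fixed field $\mathbb{Q}(\zeta_8)$ of the subgroup of squares and invoke the fact that the roots of unity in $\mathbb{Q}(\zeta_8)$ are exactly $\mu_8$. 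Your explicit separation of $S$ from the prefactor $c \in \mathbb{Q}(\sqrt{2})$, so that Galois invariance is applied where $\sigma(\sqrt{2}) = \pm\sqrt{2}$ causes no trouble, is a small point of care that the paper's proof leaves implicit.
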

\begin{proof}
  Write $\xi = \Xi_k(A, \chi)$ for convention. We may assume that $\xi \ne 0$. Let $N$ be the exponent of $A$. Since $A$ is a 2-group, $N$ is a power of two. Fix a primitive $N$-th root $\zeta$ of unity. Then
  \begin{equation*}
    \Xi_k(A, \chi) \in \mathbb{Q}(\sqrt{2}, \zeta)
    \subset \mathbb{Q}\left( \frac{1+\sqrt{-1}}{\sqrt{2}}, \zeta \right)
    \subset \mathbb{Q}(\zeta'),
  \end{equation*}
  where $\zeta'$ is a primitive $8 N$-th root of unity. Thus the order of $\xi$ is a power of two, say $2^r$. Now we consider the Galois group ${\rm Gal}(\mathbb{Q}(\xi)/\mathbb{Q})$. As is well-known,
  \begin{equation*}
    {\rm Gal}(\mathbb{Q}(\xi)/\mathbb{Q}) \cong \mathbb{Z}_{2^r}^\times \cong
    \begin{cases}
      1                                        & \text{if $r \le 1$}, \\
      \mathbb{Z}_2                             & \text{if $r = 2$},   \\
      \mathbb{Z}_2 \times \mathbb{Z}_{2^{r-2}} & \text{otherwise}.
    \end{cases}
  \end{equation*}
  On the other hand, if $\sigma \in {\rm Gal}(\mathbb{Q}(\xi)/\mathbb{Q})$, then we have
  \begin{equation*}
    \sigma^2(\xi) = \sigma^2\left(\Xi_k(A, \chi)\right) = \Xi_k(A, {}^{\sigma\sigma}\chi) = \Xi_k(A, \chi) = \xi
  \end{equation*}
  by Lemma~\ref{lem:bichar-gal} and the isometry invariance. Therefore $r \le 3$, and hence $\xi^8 = 1$.
\end{proof}

\begin{proof}[Proof of Theorem~\ref{thm:fs-ind-arith}]
  This is a consequence of the previous two lemmas. Let $A_1$ be the set of all elements of $A$ of odd orders and let $A_2$ be the Sylow 2-subgroup of $A$. By the fundamental theorem of finite abelian groups and the Chinese remainder theorem, we have a decomposition $(A, \chi) \cong (A_1, \chi_1) \times (A_2, \chi_2)$ where $\chi_i$ is the restriction of $\chi$ to $A_i$. By the multiplicativity,
  \begin{equation*}
    \nu_{2k}(m) = \sgn(\tau)^k \sqrt{|A[k]|} \cdot \Xi_k(A_1, \chi_1) \cdot \Xi_k(A_2, \chi_2).
  \end{equation*}
  It follows from the previous two lemmas that $\xi \in \mu_8 \cup \{ 0 \}$. Also (a) and (b) follow from previous lemmas.
\end{proof}

\begin{remark}
  {\rm (i)} In general, it is difficult to determine $\Xi_k(A, \chi)$ explicitly. This problem involves the determination of quadratic Gauss sums; If $A = \mathbb{Z}_N$ and $\chi(i, j) = \zeta^{- i j}$ ($i, j \in A$), where $\zeta$ is a primitive $N$-th root of unity,
  \begin{equation*}
    \Xi_2(A, \chi) = \frac{1}{\sqrt{N}} \sum_{i = 0}^{N - 1} \zeta^{i^2} \times
    \begin{cases}
      1                  & \text{if $|A|$ is odd}, \\
      \frac{1}{\sqrt{2}} & \text{otherwise}.
    \end{cases}
  \end{equation*}

  {\rm (ii)} $\Xi_k(A, \chi)$ can be a primitive eighth root of unity. For example, if $A = \mathbb{Z}_8$ and $\chi(i, j) = \zeta^{-ij}$ ($i, j \in A$) with $\zeta = e^{\pi\sqrt{-1}/4}$, then $\Xi_2(A, \chi) = \zeta$.
\end{remark}

\section{On certain sums of indicators}
\label{sec:indicator-sum}

We denote by $\nu_n(H)$ the $n$-th Frobenius-Schur indicator of the regular representation of a semisimple quasi-Hopf algebra $H$. These numbers are interesting in the representation theory of Hopf algebras in view of their monoidal Morita invariance: Let $H$ and $L$ be two semisimple quasi-Hopf algebras. If $\Rep(H)$ and $\Rep(L)$ are monoidally equivalent, then $\nu_n(H) = \nu_n(L)$ for all $n$.

For a pivotal fusion category $\mathcal{C}$, we define $\nu_n(\mathcal{C})$ by
\begin{equation*}
  \nu_n(\mathcal{C}) = \sum_{V \in \Irr(\mathcal{C})} \nu_n(V) \pdim(V).
\end{equation*}
Then $\nu_n(\mathcal{C}) = \nu_n(H)$ if $\mathcal{C} = \Rep(H)$, see \cite{S10}. The results of Section~\ref{sec:indicators} yield formulae for $\nu_n(\mathcal{C})$ where $\mathcal{C}$ is a Tambara-Yamagami category, as follows.

\begin{theorem}
  \label{thm:fs-ind-sum-1}
  Let $\mathcal{C} = \TY(A, \chi, \tau)$ be a Tambara-Yamagami category.
  \begin{enumalph}
  \item If $n$ is odd, $\nu_n(\mathcal{C}) = |A[n]|$.
  \item If $n = 2k$ is even,
    \begin{equation*}
      \nu_{2k}(\mathcal{C}) = |A[2k]|
      + \frac{\sgn(\tau)^k}{|A|^{\frac{1}{2}k - 1}}
      \sum_{a_1 \cdots a_{k} = 1} \prod_{1 \le i < j \le k} \chi(a_i, a_j).
    \end{equation*}
  \end{enumalph}
\end{theorem}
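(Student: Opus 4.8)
The plan is to evaluate the weighted sum $\nu_n(\mathcal{C}) = \sum_{V \in \Irr(\mathcal{C})} \nu_n(V)\,\pdim(V)$ directly, assembling it from the complete list of simple objects and their indicators established earlier. Since $\Irr(\mathcal{C}) = A \sqcup \{m\}$ with $\pdim(a) = 1$ for $a \in A$ and $\pdim(m) = \sqrt{|A|}$, I would split the sum as
\[
  \nu_n(\mathcal{C}) = \sum_{a \in A} \nu_n(a) + \sqrt{|A|}\,\nu_n(m).
\]

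For the group-like part, Theorem~\ref{thm:fs-ind-0} gives $\nu_n(a) = \delta_{a^n, 1}$, so $\sum_{a \in A} \nu_n(a) = \#\{ a \in A : a^n = 1 \} = |A[n]|$, independently of the parity of $n$. This single computation already disposes of the entire group-like contribution for both parts.

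For the $m$-contribution I would treat the two parities separately. If $n$ is odd, Theorem~\ref{thm:fs-ind-1} gives $\nu_n(m) = 0$, so the term $\sqrt{|A|}\,\nu_n(m)$ vanishes and $\nu_n(\mathcal{C}) = |A[n]|$, which is part (a). If $n = 2k$ is even, I would substitute the closed formula of Theorem~\ref{thm:fs-ind-ty-2} for $\nu_{2k}(m)$ and collect the powers of $|A|$: multiplying by $\sqrt{|A|}$ converts the prefactor $\sgn(\tau)^k / |A|^{\frac{1}{2}(k-1)}$ into $\sgn(\tau)^k / |A|^{\frac{1}{2}k - 1}$. Adding this to the group-like contribution $|A[2k]|$ (using $A[n] = A[2k]$) yields exactly the formula of part (b).

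There is no genuine obstacle here: all the analytic content is already carried by Theorems~\ref{thm:fs-ind-0}, \ref{thm:fs-ind-1}, and \ref{thm:fs-ind-ty-2}, and the present statement is merely their assembly against the pivotal dimensions. The only point requiring care is the exponent bookkeeping $\sqrt{|A|} / |A|^{\frac{1}{2}(k-1)} = |A|^{-(\frac{1}{2}k - 1)}$; once this is recorded correctly, the result follows immediately.
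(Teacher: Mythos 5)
Your proof is correct and matches what the paper intends: the paper gives no separate proof of this theorem, stating only that it follows from the results of Section~\ref{sec:indicators}, and your assembly of Theorems~\ref{thm:fs-ind-0}, \ref{thm:fs-ind-1}, and~\ref{thm:fs-ind-ty-2} against the pivotal dimensions $\pdim(a)=1$, $\pdim(m)=\sqrt{|A|}$ is exactly that derivation, with the exponent bookkeeping $\sqrt{|A|}/|A|^{\frac{1}{2}(k-1)} = |A|^{-(\frac{1}{2}k-1)}$ done correctly.
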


We also have:

\begin{theorem}
  \label{thm:fs-ind-sum-2}
  Let $\mathcal{C} = \TY(A, \chi, \tau)$. Then
  \begin{equation*}
    \nu_{2k}(\mathcal{C}) = (2^r + \sqrt{|A/A[k]|} \cdot \xi) \times |A[k]|
  \end{equation*}
  for some $r \ge 0$ and $\xi \in \mu_8 \cup \{ 0 \}$. If $|A|$ is odd, $r = 0$ and
  \begin{equation*}
    \xi^2 =
    \begin{cases}
      +1 & \text{if $|A|^{k - 1}/|A[k]| \equiv 1 \pmod{4}$}, \\
      -1 & \text{otherwise}.
    \end{cases}
  \end{equation*}
\end{theorem}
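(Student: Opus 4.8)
The plan is to reduce the computation of $\nu_{2k}(\mathcal{C})$ to the value of $\nu_{2k}(m)$ already obtained in Theorem~\ref{thm:fs-ind-arith}. First I would expand the defining sum $\nu_{2k}(\mathcal{C}) = \sum_{V \in \Irr(\mathcal{C})} \nu_{2k}(V) \pdim(V)$ over the simple objects $A \sqcup \{ m \}$. Using $\pdim(a) = 1$ for $a \in A$, $\pdim(m) = \sqrt{|A|}$, and $\nu_{2k}(a) = \delta_{a^{2k}, 1}$ from Theorem~\ref{thm:fs-ind-0}, this yields
\[
  \nu_{2k}(\mathcal{C}) = |A[2k]| + \sqrt{|A|} \cdot \nu_{2k}(m),
\]
which is just the relation already implicit in the proof of Theorem~\ref{thm:fs-ind-sum-1}.

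Next I would substitute $\nu_{2k}(m) = \sqrt{|A[k]|} \cdot \xi$ with $\xi \in \mu_8 \cup \{ 0 \}$ from Theorem~\ref{thm:fs-ind-arith}, giving $\sqrt{|A|} \cdot \nu_{2k}(m) = \sqrt{|A| \cdot |A[k]|} \cdot \xi$. The key algebraic observation is the identity $\sqrt{|A| \cdot |A[k]|} = \sqrt{|A/A[k]|} \cdot |A[k]|$, valid because $|A/A[k]| = |A|/|A[k]|$; hence this term equals $\sqrt{|A/A[k]|} \cdot |A[k]| \cdot \xi$.

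It then remains to write $|A[2k]|$ as $2^r |A[k]|$. For this I would consider the homomorphism $A[2k] \to A$, $a \mapsto a^k$: since $(a^k)^2 = a^{2k} = 1$ it lands in $A[2]$, and its kernel is exactly $A[k]$, so $A[2k]/A[k]$ embeds into the elementary abelian $2$-group $A[2]$. Thus $|A[2k]| = 2^r |A[k]|$ for some $r \ge 0$, and factoring out $|A[k]|$ produces the asserted form $\nu_{2k}(\mathcal{C}) = (2^r + \sqrt{|A/A[k]|} \cdot \xi) \cdot |A[k]|$.

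Finally, for the odd case I would note that $|A|$ odd forces $A[2k] = A[k]$, since $\ord(a) \mid \gcd(2k, |A|) = \gcd(k, |A|)$, whence $r = 0$; the value of $\xi^2$ is then exactly Theorem~\ref{thm:fs-ind-arith}(b). I do not expect a genuine obstacle: all the analytic content is packaged in Theorem~\ref{thm:fs-ind-arith}, and the remaining steps are bookkeeping, the only mild subtlety being the elementary-$2$-group argument that produces the integer $r$.
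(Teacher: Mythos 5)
Your proposal is correct and follows essentially the same route as the paper: expand $\nu_{2k}(\mathcal{C})=|A[2k]|+\sqrt{|A|}\,\nu_{2k}(m)$, substitute Theorem~\ref{thm:fs-ind-arith}, use $\sqrt{|A|\cdot|A[k]|}=\sqrt{|A/A[k]|}\cdot|A[k]|$, and observe that $|A[2k]/A[k]|=2^r$ (the paper notes the quotient has exponent at most two, while you embed it into $A[2]$ via $a\mapsto a^k$ --- the same fact). The odd-order case and the value of $\xi^2$ are handled exactly as in the paper.
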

\begin{proof}
  Since the exponent of $A[2k]/A[k]$ is less or equal to two, $|A[2k]/A[k]| = 2^r$ for some $r \ge 0$. By the definition of $\nu_{2k}(\mathcal{C})$ and Theorem~\ref{thm:fs-ind-arith},
  \begin{equation*}
    \nu_{2k}(\mathcal{C}) = |A[2k]| + \sqrt{|A[k]|} \cdot \xi \cdot \sqrt{|A|}
    = (2^r + \sqrt{|A/A[k]|} \cdot \xi) |A[k]|.
  \end{equation*}
  for some $\xi \in \mu_8 \cup \{ 0 \}$. If $|A|$ is odd, $r = 0$ since $A[2k] = A[k]$.
\end{proof}

If $G$ is a finite group, then $\nu_n(\mathbb{C}G) = |G[n]|$. Frobenius proved that $|G[n]|$ is divisible by $n$ if $n$ divides $|G|$. We call this fact {\em Frobenius theorem for finite groups}. One might ask whether $\nu_n(\mathcal{C})$ has the same property. Motivated by this question, the author introduced the following definition in \cite{S10}.

\begin{definition}
  \label{def:frobenius}
  Let $\mathcal{C}$ be a pivotal fusion category such that $\dim(\mathcal{C})$ is a positive integer. We say that {\em Frobenius theorem holds for $\mathcal{C}$} if $\nu_n(\mathcal{C})/n$ is an algebraic integer for every $n | \dim(\mathcal{C})$.
\end{definition}

We also say that Frobenius theorem holds for a semisimple quasi-Hopf algebra $H$ if it holds for $\mathcal{C} = \Rep(H)$. In \cite{S10}, the author showed that Frobenius theorem holds for various semisimple Hopf algebras and conjectured that it holds for every semisimple Hopf algebra. The author also pointed out that there exist many semisimple quasi-Hopf algebras for which Frobenius theorem does not hold.

It is interesting to investigate whether Frobenius theorem holds for Tambara-Yamagami categories. By results of Section~\ref{sec:indicators}, we can prove the following:

\begin{theorem}
  \label{thm:TY-frobenius}
  Let $\mathcal{C} = \TY(A, \chi, \tau)$.
  \begin{enumalph}
  \item Frobenius theorem holds for $\mathcal{C}$ if $A \equiv 1 \pmod{4}$.
  \item Frobenius theorem does not hold for $\mathcal{C}$ if $|A| \equiv 3 \pmod{4}$. In fact, $\nu_2(\mathcal{C})$ is not divisible by two.
  \end{enumalph}
\end{theorem}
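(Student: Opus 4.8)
The plan is to verify Definition~\ref{def:frobenius} directly, feeding the explicit evaluations of $\nu_n(\mathcal{C})$ from Theorems~\ref{thm:fs-ind-sum-1}, \ref{thm:fs-ind-sum-2} and~\ref{thm:fs-ind-arith} into the integrality condition. Since $\dim(\mathcal{C}) = 2|A|$, in case (a) the hypothesis makes $|A|$ odd, so every $n \mid 2|A|$ is either an odd divisor $n \mid |A|$ or of the form $n = 2k$ with $k \mid |A|$ odd. For odd $n$ we have $\nu_n(\mathcal{C}) = |A[n]|$ by Theorem~\ref{thm:fs-ind-sum-1}(a), and the classical Frobenius theorem for finite groups gives $n \mid |A[n]|$, so $\nu_n(\mathcal{C})/n \in \mathbb{Z}$ at once. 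The substance of (a) is therefore the even case.

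For $n = 2k$ I would exploit $\gcd(2,k) = 1$: a Bézout argument shows that $2k$ divides an algebraic integer $\alpha$ as soon as $2 \mid \alpha$ and $k \mid \alpha$ separately, so I can treat the two primes independently. Writing $\nu_{2k}(\mathcal{C}) = |A[k]| + \sqrt{|A|\,|A[k]|}\cdot \xi$ (valid since $A[2k] = A[k]$ for $|A|$ odd, combining Theorems~\ref{thm:fs-ind-sum-2} and~\ref{thm:fs-ind-arith}), the $k$-divisibility is the easy half: Frobenius gives $k \mid |A[k]|$, and together with $k \mid |A|$ this yields $k^2 \mid |A|\,|A[k]|$, so that $\sqrt{|A|\,|A[k]|}/k = \sqrt{|A|\,|A[k]|/k^2}$ is an algebraic integer; hence $\nu_{2k}(\mathcal{C})/k$ is too.

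The main obstacle is the factor $2$. Since $|A|$ is odd, Theorem~\ref{thm:fs-ind-arith}(b) gives $\xi^2 = \pm 1$, so $\xi \in \mu_4$ and $\nu_{2k}(\mathcal{C})$ is a root of $x^2 - 2|A[k]|\,x + (|A[k]|^2 - \xi^2 D)$, where $D = |A|\,|A[k]|$. It is then enough to check $|A[k]|^2 - \xi^2 D \equiv 0 \pmod 4$, for then $\nu_{2k}(\mathcal{C})/2$ is a root of a monic integer polynomial. Since odd squares are $\equiv 1 \pmod 8$ and $|A| \equiv 1 \pmod 4$ forces $D \equiv |A[k]| \pmod 4$, this reduces to $|A[k]| \equiv \xi^2 \pmod 4$. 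The crux is that this is exactly what Theorem~\ref{thm:fs-ind-arith}(b) supplies: with $|A| \equiv 1 \pmod 4$ the integer $|A|^{k-1}/|A[k]|$ is $\equiv |A[k]|^{-1} \equiv |A[k]| \pmod 4$ (units mod $4$ being self-inverse), so $\xi^2 = +1 \iff |A[k]| \equiv 1$ and $\xi^2 = -1 \iff |A[k]| \equiv 3 \pmod 4$; in both cases $|A[k]| \equiv \xi^2 \pmod 4$. Thus $2 \mid \nu_{2k}(\mathcal{C})$, and combined with the previous paragraph, $2k \mid \nu_{2k}(\mathcal{C})$, proving (a).

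For (b) it suffices to exhibit one failing divisor, and $n = 2 \mid 2|A|$ is the natural candidate. With $|A| \equiv 3 \pmod 4$ odd we have $|A[2]| = 1$, so Theorem~\ref{thm:fs-ind-sum-1}(b) with $k = 1$ gives $\nu_2(\mathcal{C}) = 1 + \sgn(\tau)\sqrt{|A|}$. As $|A| \equiv 3 \pmod 4$ is never a perfect square, $\sqrt{|A|}$ is irrational, so the degree-two polynomial $y^2 - y + (1 - |A|)/4$ is the minimal polynomial of $\nu_2(\mathcal{C})/2$; its constant term fails to be an integer precisely because $|A| \equiv 3 \pmod 4$. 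Hence $\nu_2(\mathcal{C})/2$ is not an algebraic integer and Frobenius theorem fails for $\mathcal{C}$.
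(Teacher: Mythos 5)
Your proof is correct, and it rests on the same inputs as the paper's own argument: the evaluation $\nu_{2k}(\mathcal{C}) = |A[k]| + \sqrt{|A|\,|A[k]|}\,\xi$ extracted from Theorems~\ref{thm:fs-ind-sum-2} and~\ref{thm:fs-ind-arith}, the congruence of Theorem~\ref{thm:fs-ind-arith}(b), and the classical Frobenius theorem for finite groups; the odd-$n$ case and the choice $n=2$ in (b) coincide with the paper's as well. The assembly of the even case, however, is genuinely different. The paper absorbs the ambiguity $\xi^2 = \pm 1$ into the radicand: it sets $d = \pm|A/A[k]|$ with the sign chosen so that $d \equiv 1 \pmod{4}$ (Theorem~\ref{thm:fs-ind-arith}(b) together with $|A| \equiv 1 \pmod{4}$ guarantees $\sqrt{|A/A[k]|}\,\xi = \pm\sqrt{d}$ for this $d$), and then writes
\begin{equation*}
  \frac{\nu_{2k}(\mathcal{C})}{2k} = \frac{|A[k]|}{k} \cdot \frac{1 \pm \sqrt{d}}{2},
\end{equation*}
concluding at once from $k \mid |A[k]|$ and the standard fact that $(1\pm\sqrt{d})/2$ is an algebraic integer if and only if the odd integer $d$ is $\equiv 1 \pmod{4}$; the ``only if'' direction of the same fact disposes of (b). You instead localize at the two coprime factors of $n = 2k$: a B\'ezout identity reduces $2k$-divisibility to $2$-divisibility and $k$-divisibility separately, the $k$-part follows from $k^2 \mid |A|\,|A[k]|$, and the $2$-part from an explicit monic quadratic over $\mathbb{Z}$ satisfied by $\nu_{2k}(\mathcal{C})/2$, whose constant term you verify is integral using exactly the same congruence; likewise in (b) you reprove the needed direction of the paper's opening remark by computing the minimal polynomial of $(1 \pm \sqrt{|A|})/2$. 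What the paper's packaging buys is brevity and a closed form that exhibits $\nu_n(\mathcal{C})/n$ visibly as a rational integer times a quadratic algebraic integer; what yours buys is self-containedness (the quadratic-integer criterion is proved rather than quoted) and a prime-by-prime divisibility strategy that would adapt unchanged to moduli with more than two coprime factors.
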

\begin{proof}
  We first remark the following fact in the algebraic number theory. Let $d$ be an odd integer (which may not be square-free). Then $(1 \pm \sqrt{d})/2$ are algebraic integers if and only if $d \equiv 1 \pmod{4}$.

  {\rm (a)} Let $n$ be a divisor of $\dim(\mathcal{C}) = 2|A|$. Suppose that $n$ is odd. Then, since $|A|$ is odd, $n$ is a divisor of $|A|$. Hence Frobenius theorem for finite groups yields that $\nu_n(\mathcal{C}) = |A[n]|$ is divisible by $n$.

  Next, we suppose that $n = 2k$ is even. Then $k$ is a divisor of $|A|$. Put
  \begin{equation*}
    d = |A/A[k]| \times
    \begin{cases}
      +1 & \text{if $|A/A[k]| \equiv 1 \pmod{4}$}, \\
      -1 & \text{otherwise}.
    \end{cases}
  \end{equation*}
  Then $d \equiv 1 \pmod{4}$. Note that $|A|^{k-1}/|A[k]| \equiv |A/A[k]| \pmod{4}$ by the assumption. Theorem~\ref{thm:fs-ind-sum-2} states that $\nu_n(\mathcal{C})/n$ is of the form
  \begin{equation*}
    \frac{\nu_{n}(\mathcal{C})}{n}
    = \frac{|A[k]|}{k} \cdot \frac{1 \pm \sqrt{d}}{2}.
  \end{equation*}
  This is an algebraic integer by Frobenius theorem for finite groups and the above remark.

  {\rm (b)} By Theorem~\ref{thm:fs-ind-sum-2}, $\nu_2(\mathcal{C})/2$ is of the form $(1 \pm \sqrt{|A|})/2$. Hence this is not an algebraic integer by the above remark.
\end{proof}

\begin{corollary}
  \label{cor:TY-frobenius}
  Let $H$ be a quasi-Hopf algebra such that $\Rep(H)$ is monoidally equivalent to $\TY(A, \chi, \tau)$. Suppose that $\dim(H) = 2N$ with $N$ odd. Then Frobenius theorem holds for $H$.
\end{corollary}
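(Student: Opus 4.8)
The plan is to reduce the statement to a direct application of Theorem~\ref{thm:TY-frobenius}(a) to the category $\mathcal{C} = \TY(A, \chi, \tau)$ itself. Since $\Rep(H)$ and $\mathcal{C}$ are monoidally equivalent, and both the numbers $\nu_n(-)$ and the global dimension are monoidal invariants (the canonical pivotal structure, being the unique one realizing $\pdim = \FPdim$, is transported by any monoidal equivalence), we have $\nu_n(H) = \nu_n(\Rep(H)) = \nu_n(\mathcal{C})$ for all $n$ and $\dim(\Rep(H)) = \dim(\mathcal{C}) = 2|A|$. Hence Frobenius theorem holds for $H$ if and only if it holds for $\mathcal{C}$, and it suffices to verify the hypothesis $|A| \equiv 1 \pmod{4}$ of Theorem~\ref{thm:TY-frobenius}(a).

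First I would pin down $|A|$ numerically. Because $\Rep(H) \simeq \mathcal{C}$ is semisimple, $H$ is a semisimple quasi-Hopf algebra, and its vector-space dimension equals the global dimension of its representation category: indeed $\mathcal{C}$ is pseudo-unitary, so $\dim(\mathcal{C}) = \FPdim(\mathcal{C}) = \sum_{V} \FPdim(V)^2 = 2|A|$, while $\FPdim(\Rep(H)) = \dim_{\mathbb{C}}(H)$. Combining this with the hypothesis $\dim(H) = 2N$ gives $|A| = N$, which is odd.

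The crux is to upgrade ``$|A|$ odd'' to ``$|A| \equiv 1 \pmod{4}$'', and here I would use that $H$ is genuinely a quasi-Hopf algebra. The forgetful functor $\Rep(H) \to \Vect$ is a quasi-fiber functor, so $V \mapsto \dim_{\mathbb{C}}(\text{forget}(V))$ is a ring homomorphism $K(\mathcal{C}) \to \mathbb{Z}$ taking positive integer values on the classes of simple objects. By the characterization of the Frobenius-Perron dimension as the unique ring homomorphism $K(\mathcal{C}) \to \mathbb{R}$ positive on simple objects, this forces $\FPdim(V) \in \mathbb{Z}_{>0}$ for every simple $V$; in particular $\FPdim(m) = \sqrt{|A|}$ is a positive integer, so $|A|$ is a perfect square (this is the content of Tambara's analysis of fiber functors \cite{MR1776075}). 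An odd perfect square is congruent to $1$ modulo $4$ (in fact modulo $8$), so $|A| \equiv 1 \pmod{4}$.

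With $|A| \equiv 1 \pmod{4}$ in hand, Theorem~\ref{thm:TY-frobenius}(a) shows that Frobenius theorem holds for $\mathcal{C}$, and by the monoidal invariance established in the first paragraph it holds for $H$. I expect the main obstacle to be the justification that the quasi-Hopf structure forces $|A|$ to be a perfect square, i.e.\ the integrality of $\FPdim(m)$; once that is secured, the congruence $|A| \equiv 1 \pmod{4}$ and the appeal to Theorem~\ref{thm:TY-frobenius}(a) are routine.
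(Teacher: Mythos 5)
Your proposal is correct and follows essentially the same route as the paper: the paper's proof likewise reduces to Theorem~\ref{thm:TY-frobenius}(a) by noting that $\FPdim(m) = \sqrt{|A|}$ must be an integer when $\Rep(H) \approx \TY(A,\chi,\tau)$, so $|A| = N$ is an odd perfect square and hence $\equiv 1 \pmod{4}$. You simply make explicit two steps the paper leaves implicit (the monoidal invariance of $\nu_n$ and $\dim$, and the quasi-fiber-functor argument forcing $\FPdim$ to be integer-valued), which is a sound filling-in rather than a different approach.
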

\begin{proof}
  It suffices to show that $N = |A| \equiv 1 \pmod{4}$. The Frobenius-Perron dimension of $m \in \TY(A, \chi, \tau)$ is $\sqrt{|A|}$. On the other hand, it must be an integer under our assumption. Thus $|A| = (\sqrt{|A|})^2 \equiv 1 \pmod{4}$.
\end{proof}

\begin{remark}
  Theorem~\ref{thm:TY-frobenius} and Corollary~\ref{cor:TY-frobenius} cover cases where there does not exist a semisimple Hopf algebra $H$ such that $\mathcal{C} = \TY(A, \chi, \tau)$ is monoidally equivalent to $\Rep(H)$.

  Suppose that there exists such a semisimple Hopf algebra $H$. Then Frobenius theorem holds for $\mathcal{C}$. This can be proved without using any results of Section~\ref{sec:indicators}, as follows: Natale \cite{MR2016657} showed that, if this is the case, $H$ fits into a central extension
  \begin{equation*}
    1 \longrightarrow \mathbb{C}^{\mathbb{Z}_2} \longrightarrow H \longrightarrow \mathbb{C}A \longrightarrow 1.
  \end{equation*}
  The author has showed that Frobenius theorem holds for such an $H$ in \cite{S10}.
\end{remark}

\section{Hopf algebra with the traceless antipode}
\label{sec:traceless}

Kashina, Montgomery and Ng concerned in \cite{KMN09} whether there exists a semisimple Hopf algebra whose antipode has trace zero. In this section, we give an affirmative answer to this problem by using Tambara-Yamagami categories.

Let $H$ be a semisimple Hopf algebra with the antipode $S$. We note that the trace of $S$ is equal to $\nu_2(H)$, the second Frobenius-Schur indicator of the regular representation of $H$. Suppose that $\Rep(H)$ is monoidally equivalent to $\TY(A, \chi, \tau)$ for some $A$, $\chi$ and $\tau$. By using our formula, $\Trace(S)$ can be computed as follows:
\begin{equation}
  \label{eq:trace-antipode}
  \Trace(S) = \nu_2(H) = \nu_2(\TY(A, \chi, \tau)) = |A[2]| + \sgn(\tau) \sqrt{|A|}.
\end{equation}

Let $\mathcal{C}$ be a fusion category. It is known that there exists a semisimple Hopf algebra $H$ such that $\mathcal{C}$ is monoidally equivalent to $\Rep(H)$ if and only if $\mathcal{C}$ admits a {\em fiber functor}, that is, a $\mathbb{C}$-linear faithful exact monoidal functor from $\mathcal{C}$ to the category of finite-dimensional vector spaces over $\mathbb{C}$.

Fiber functors of $\mathcal{C} = \TY(A, \chi, \tau)$ was classified by Tambara in \cite{MR1776075}. Suppose that $|A|$ is square so that the Frobenius-Perron dimension of every object of $\mathcal{C}$ is an integer. Tambara showed that fiber functors of $\mathcal{C}$ is parametrized by pairs $(\sigma, \rho)$ of an involution $\sigma \in \Aut(A)$ and a map
\begin{equation*}
  \rho: V(\sigma) := \frac{\{ a \in A \mid \sigma(a) = a \}}{\{ a \cdot \sigma(a) \mid a \in A \}} \to \{ \pm 1 \}
\end{equation*}
satisfying the following conditions:
\begin{enumerate}
\item The bicharacter $A \times A \to \mathbb{C}^\times$, $(a, b) \mapsto \chi(a, \sigma(b))$ is alternating.
\item Let $\overline{\chi}$ be the bicharacter of $V(\sigma)$ induced from $\chi$. Then $\rho \in C(\overline{\chi})$ and the following equation holds:
  \begin{equation}
    \label{eq:sign-quadratic}
    \sgn(\tau) = \frac{1}{\sqrt{|V(\sigma)|}} \sum_{a \in V(\sigma)} \rho(a).
  \end{equation}
\end{enumerate}
The reader should refer to \cite{MR1776075} for further details. In the original paper, the right-hand side of equation~(\ref{eq:sign-quadratic}) appears as the sign of quadratic form on $V(\sigma)$, see \cite[Lemma~2.10]{MR1776075}.

Now we fix $r > 0$ and set $A = \mathbb{Z}_4^{2 r}$. Then $|A[2]| = \sqrt{|A|} = 2^{2r}$.

\begin{lemma}
  \label{lem:fiber-functor}
  Define $\chi: A \times A \to \mathbb{C}^\times$ by $\chi(a, b) = (\sqrt{-1})^{E(a, b)}$ where
  \begin{equation*}
    E(a_1, \cdots, a_{2r}; b_1, \cdots, b_{2r}) = \sum_{i = 1}^r (a_{2i - 1} b_{2i} + a_{2i} b_{2i - 1})
    \quad (a_j, b_j \in \mathbb{Z}_4).
  \end{equation*}
  Then $\TY(A, \chi, -2^{-2r})$ admits a fiber functor.
\end{lemma}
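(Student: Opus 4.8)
The plan is to apply Tambara's classification recalled above: I will produce an involution $\sigma \in \Aut(A)$ together with a map $\rho\colon V(\sigma) \to \{\pm 1\}$ satisfying Tambara's two conditions. Here $|A| = |\mathbb{Z}_4^{2r}| = 2^{4r}$, so $\tau = -2^{-2r}$ is indeed a square root of $|A|^{-1}$ and $\chi$ is the non-degenerate symmetric bicharacter attached to the hyperbolic form $E$; thus $\TY(A,\chi,\tau)$ is well defined. Since $\sgn(\tau) = \sgn(-2^{-2r}) = -1$, the value I must match in~(\ref{eq:sign-quadratic}) is $-1$. Writing $A$ additively, I first choose $\sigma$ to be the automorphism negating the even-indexed coordinates and fixing the odd-indexed ones, i.e. $\sigma(a)_{2i-1} = a_{2i-1}$ and $\sigma(a)_{2i} = -a_{2i}$; this is visibly an involution. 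To check the first condition I compute $E(a,\sigma(a)) = \sum_i\bigl(a_{2i-1}(-a_{2i}) + a_{2i}a_{2i-1}\bigr) = 0$, so $\chi(a,\sigma(a)) = 1$ for every $a$, i.e. $(a,b)\mapsto\chi(a,\sigma(b))$ is alternating.

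Next I would identify $V(\sigma)$ explicitly. The fixed subgroup is $\{a : a_{2i}\in\{0,2\}\} \cong \mathbb{Z}_4^r \times \mathbb{Z}_2^r$, while the image of $\id + \sigma$ is $\{a : a_{2i-1}\in\{0,2\},\ a_{2i}=0\} \cong \mathbb{Z}_2^r$; hence $V(\sigma) \cong \mathbb{Z}_2^{2r}$, with $|V(\sigma)| = 2^{2r}$ and $\sqrt{|V(\sigma)|} = 2^r$. Taking representatives $a = (u_1,2v_1,\dots,u_r,2v_r)$ with $u_i,v_i\in\{0,1\}$, a short computation gives $E(a,b) = 2\sum_i(u_iv'_i + v_iu'_i)$, so the induced bicharacter is the hyperbolic $\mathbb{Z}_2$-form $\overline{\chi}\bigl((u,v),(u',v')\bigr) = (-1)^{\sum_i(u_iv'_i + u'_iv_i)}$ (one also checks directly that $\chi$ is trivial on the image of $\id+\sigma$, so this descends).

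For the second condition I would first verify that $\rho_0(u,v) = (-1)^{\sum_i u_iv_i}$ lies in $C(\overline{\chi})$ by computing $\partial\rho_0 = \overline{\chi}$, and that it is $\{\pm1\}$-valued. The Gauss-type sum then factors as $\sum_{a\in V(\sigma)}\rho_0(a) = \prod_i\bigl(\sum_{u_i,v_i}(-1)^{u_iv_i}\bigr) = 2^r$, which yields $+1$ on the right-hand side of~(\ref{eq:sign-quadratic}) instead of the required $-1$. This sign mismatch is the main obstacle. It is resolved using the freedom to twist $\rho_0$ by a character of $V(\sigma)$: recall that $V(\sigma)^\vee$ acts freely and transitively on $C(\overline{\chi})$, and since $V(\sigma)$ has exponent $2$ every such character is $\{\pm1\}$-valued. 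Setting $\rho(u,v) = \rho_0(u,v)\cdot(-1)^{u_1+v_1}$ keeps $\rho \in C(\overline{\chi})$ with values in $\{\pm1\}$, and the per-factor sum for the index $i=1$ becomes $\sum_{u_1,v_1}(-1)^{u_1v_1+u_1+v_1} = -2$ while the remaining factors stay equal to $2$. Hence $\sum_{a\in V(\sigma)}\rho(a) = -2^r$, so~(\ref{eq:sign-quadratic}) reads $-1 = \sgn(\tau)$, as needed.

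The pair $(\sigma,\rho)$ then satisfies both of Tambara's conditions and therefore determines a fiber functor on $\TY(A,\chi,-2^{-2r})$, which proves the lemma. The only genuinely delicate point is the final sign, and I expect the whole argument to hinge on choosing the correct character twist; the computations of $\sigma$, $V(\sigma)$, and $\overline{\chi}$ are routine once the involution is fixed.
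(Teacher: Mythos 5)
Your proof is correct and takes essentially the same route as the paper: Tambara's classification applied to a coordinate-negating involution (the paper negates the odd-indexed coordinates rather than the even ones, which is immaterial), with $V(\sigma)\cong\mathbb{F}_2^{2r}$ carrying the hyperbolic bicharacter. Your final $\rho$, the hyperbolic quadratic form twisted by the character $(-1)^{u_1+v_1}$, is exactly the paper's $q(x)=x_1+x_2+\sum_i x_{2i-1}x_{2i}$, so even the sign-fixing step coincides; the only difference is presentational, in that the paper writes this $q$ down at once instead of discovering the linear twist after a first failed attempt.
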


Hence there exists a semisimple Hopf algebra $H$ such that $\Rep(H)$ is monoidally equivalent to $\TY(A, \chi, -2^{-2r})$. It follows from~(\ref{eq:trace-antipode}) that the trace of the antipode of such an $H$ is zero.

\begin{proof}
  Define an involution $\sigma \in \Aut(A)$ by
  \begin{equation*}
    \sigma(a_1, \cdots, a_{2r}) = (-a_1, a_2, \cdots, (-1)^s a_s, \cdots, -a_{2r - 1}, a_{2r})
    \quad (a_i \in \mathbb{Z}_4).
  \end{equation*}
  Then $\chi(a, \sigma(a)) = 1$ for all $a \in A$. $V := V(\sigma)$ is computed as
  \begin{equation*}
    V = \frac{2 \mathbb{Z}_4}{0} \times \frac{\mathbb{Z}_4}{2 \mathbb{Z}_4}
    \times \cdots \times
    \frac{2 \mathbb{Z}_4}{0} \times \frac{\mathbb{Z}_4}{2 \mathbb{Z}_4} \quad (\text{$r$ times}).
  \end{equation*}
  Note that each component is isomorphic to the additive group of the field $\mathbb{F}_2$ of two elements. Let $f: 2 \mathbb{Z}_4 \to \mathbb{F}_2$ and $g: \mathbb{Z}_4/2\mathbb{Z}_4 \to \mathbb{F}_2$ be isomorphisms. In what follows, we identify $V$ with $\mathbb{F}_2^{2r}$ via the map $f \times g \times \cdots \times f \times g$.

  Let $\overline{\chi}$ be the bicharacter of $V$ induced from $\chi$. Under the above identification of $V$ with $\mathbb{F}_2^{2r}$, $\overline{\chi}$ is given by $\overline{\chi}(v, w) = (-1)^{B(v, w)}$ where
  \begin{equation*}
    B(x^{}_1, \cdots, x^{}_{2r}; y^{}_1, \cdots, y^{}_{2r})
    = \sum_{i = 1}^r (x^{}_{2i - 1} y^{}_{2i} + x^{}_{2i} y^{}_{2i - 1})
    \quad (x^{}_j, y^{}_j \in \mathbb{F}_2).
  \end{equation*}
  Define $q: V \to \mathbb{F}_2$ by
  \begin{equation*}
    q(x^{}_1, \cdots, x^{}_{2r}) = x^{}_1 + x^{}_2 + \sum_{i = 1}^{r} x^{}_{2i-1} x^{}_{2i}
    \quad (x_i \in \mathbb{F}_2).
  \end{equation*}
  Then we have that $B(x, y) = q(x) - q(x + y) + q(y)$ for all $x, y \in \mathbb{F}_2^{2r}$, and hence the function $\rho: V \to \{ \pm 1 \}$ given by $\rho(v) = (-1)^{q(v)}$ is in $C(\overline{\chi})$. Moreover,
  \begin{equation*}
    \frac{1}{\sqrt{|V|}} \sum_{v \in V} \rho(v)
    = \frac{1}{2^r} \left( \sum_{i, j = 0}^{1} (-1)^{i + j + ij} \right)
    \left( \sum_{i, j = 0}^{1} (-1)^{i j} \right)^{r - 1} = -1.
  \end{equation*}
  Now our claim follows from the classification result of fiber functors of Tambara-Yamagami categories.
\end{proof}

\section{Computational examples}
\label{sec:computation}

In this section, we determine Frobenius-Schur indicators of simple objects of Tambara-Yamagami categories associated with finite-dimensional vector spaces over $\mathbb{F}_p = \mathbb{Z}/p\mathbb{Z}$, where $p$ is a prime number.

\subsection{Odd characteristic}

We first deal with the case where $p$ is odd. Let $V = \mathbb{F}_p^r$ be an $r$-dimensional vector space over $\mathbb{F}_p$ with $e_1, \cdots, e_r$ the standard basis. If $B: V \times V \to \mathbb{F}_p$ is a bilinear form on $V$,
\begin{equation*}
  \chi_B^{}: V \to V \to \mathbb{C}^\times; \quad
  (v, w) \mapsto \exp\left(\frac{2\pi\sqrt{-1}}{p}B(v, w)\right) \quad (v, w \in V)
\end{equation*}
is a bicharacter on $V$. The assignment $B \mapsto \chi_B^{}$ gives a bijection between bilinear forms on $V$ and bicharacters of $V$. It is easy to see that $\chi_B^{}$ is a non-degenerate symmetric bicharacter of $V$ if and only if $B$ is a non-degenerate symmetric bilinear form.

We can identify a bilinear form $B$ on $V$ with the matrix $B = (B(e_i, e_j))$. We say that $\chi_B^{}$ is diagonal if $B$ is diagonal. Given a non-degenerate symmetric bicharacter $\chi = \chi_B^{}$ of $V$, define $D(\chi) \in \{ \pm 1 \}$ by
\begin{equation*}
  D(\chi) = \legendre{\det B}{p}
  := \begin{cases}
    +1 & \text{if $x^2 = \det B$ has a solution in $\mathbb{F}_p$}, \\
    -1 & \text{otherwise},
  \end{cases}
\end{equation*}
by using the Legendre symbol. It can be showed that $D(\chi)$ is a complete invariant of non-degenerate symmetric bicharacters of $V$ (see \cite[Chapter IV]{MR0344216}). In particular, every such bicharacter is isometric to a diagonal one.

We need the following well-known formula: If $a$ is relatively prime to $p$,
\begin{equation}
  \label{eq:gauss-sum}
  \sum_{i = 0}^{p - 1} \exp\left(\frac{2\pi\sqrt{-1}}{p} a i^2 \right)
  = \legendre{a}{p} \varepsilon_p \sqrt{p}
\end{equation}
where $\varepsilon_p$ indicates one if $p \equiv 1 \pmod{4}$ and $\sqrt{-1}$ if $p \equiv 3 \pmod{4}$.

\begin{theorem}
  \label{thm:FS-ind-comp-1}
  Let $\chi$ be a non-degenerate symmetric bicharacter of $V$ and $\tau$ a square root of $|V|$. The $2k$-th Frobenius-Schur indicator of $m \in \TY(V, \chi, \tau)$ is given as follows:
  \begin{enumalph}
  \item If $k$ is relatively prime to $p$,
    \begin{equation*}
      \nu_{2k}(m) = \sgn(\tau)^k \varepsilon_p^{r(k+1)} \legendre{-k}{p}^r \legendre{-2}{p}^{r(k+1)} D(\chi)^{k+1}.
    \end{equation*}
  \item If $k$ is a multiple of $p$,
    \begin{equation*}
      \nu_{2k}(m) = \sgn(\tau)^k \varepsilon_p^{r k} \legendre{-2}{p}^{r k} D(\chi)^k \times p^{\frac{1}{2}r}.
    \end{equation*}
  \end{enumalph}
\end{theorem}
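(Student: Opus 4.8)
The plan is to compute $\nu_{2k}(m)$ by evaluating the closed formula of Theorem~\ref{thm:fs-ind-ty-2}, namely $\nu_{2k}(m) = \sgn(\tau)^k |A|^{-\frac{1}{2}(k-1)} \sum_{a_1 \cdots a_k = 1} \prod_{i < j} \chi(a_i, a_j)$, where here $A = V = \mathbb{F}_p^r$. Since $D(\chi)$ is a complete isometry invariant and every non-degenerate symmetric bicharacter is isometric to a diagonal one, and since the summand $\prod_{i<j}\chi(a_i,a_j)$ is manifestly an isometry invariant of the sum, I may assume $\chi = \chi_B$ with $B = \mathrm{diag}(c_1,\dots,c_r)$ diagonal. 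Then the whole expression factors as a product over the $r$ coordinates of the rank-one contributions, so the core of the argument reduces to the case $r = 1$, i.e.\ $V = \mathbb{F}_p$ with $\chi(x,y) = \exp(2\pi\sqrt{-1}\,cxy/p)$ for a fixed $c \in \mathbb{F}_p^\times$; I then recombine the $r$ factors and track how $D(\chi) = \prod_i \legendre{c_i}{p}$ and the exponents accumulate.

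For the rank-one computation I would first write $\prod_{i<j}\chi(a_i,a_j) = \chi\bigl(\prod_{i<j}\dots\bigr)$ in terms of the quadratic character of $a_1 + \cdots + a_k$. The key algebraic identity is that on the constraint set $a_1 + \cdots + a_k = 0$ (additive notation in $\mathbb{F}_p$), one has $2\sum_{i<j} a_i a_j = (\sum_i a_i)^2 - \sum_i a_i^2 = -\sum_i a_i^2$, so that $\prod_{i<j}\chi(a_i,a_j) = \exp\!\bigl(-\tfrac{2\pi\sqrt{-1}}{p}\cdot \tfrac{c}{2}\sum_i a_i^2\bigr)$ (using $p$ odd so that $2$ is invertible). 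Substituting and summing over the hyperplane $\{a_1 + \cdots + a_k = 0\}$, I would diagonalize the resulting quadratic form $\sum_i a_i^2$ restricted to that hyperplane, or equivalently parametrize by the free coordinates $a_1,\dots,a_{k-1}$ with $a_k = -(a_1+\cdots+a_{k-1})$, turning the sum into a Gauss sum of a nondegenerate quadratic form in $k-1$ variables whose determinant is computed from the circulant/identity-minus-rank-one structure. This is where the split between $\gcd(k,p)=1$ and $p \mid k$ arises: the determinant of the restricted form is a scalar multiple of $k$, so it is invertible precisely when $p \nmid k$ (giving a full rank-$(k-1)$ Gauss sum, case (a)), while when $p \mid k$ the form degenerates and an extra factor of $p^{1/2}$ appears from the radical (case (b)).

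The Gauss sum evaluations then come directly from~(\ref{eq:gauss-sum}), and the main bookkeeping is to assemble the powers of $\varepsilon_p$, the Legendre symbols $\legendre{-k}{p}$, $\legendre{-2}{p}$, and $D(\chi)$ with the correct exponents $r$, $r(k+1)$, $rk$, etc. I expect the main obstacle to be precisely this determinant calculation and the careful tracking of Legendre symbols: computing $\det$ of the quadratic form $\sum_{i=1}^k a_i^2$ restricted to the hyperplane (which I anticipate equals $\pm k$ up to squares, explaining the $\legendre{-k}{p}$ factor in (a)), and correctly distributing the multiplicativity of the Gauss sum across the $r$ diagonal blocks while reconciling the normalization $|A|^{-\frac{1}{2}(k-1)} = p^{-\frac{1}{2}r(k-1)}$ against the $\sqrt{p}$-powers produced by~(\ref{eq:gauss-sum}). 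The factor $D(\chi)^{k+1}$ versus $D(\chi)^k$ in the two cases, and the residual $p^{\frac{1}{2}r}$ in case (b), should fall out once the rank of the quadratic form and its determinant are pinned down correctly.
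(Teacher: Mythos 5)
Your approach is correct and would yield the theorem, but it takes a genuinely different route from the paper. The paper never touches Theorem~\ref{thm:fs-ind-ty-2}: it exhibits the explicit quadratic function $\rho(i_1,\dots,i_r)=\exp\bigl(\tfrac{2\pi h\sqrt{-1}}{p}\sum_s a_s i_s^2\bigr)$ with $h=(p-1)/2$, checks $\rho\in C(\chi)$, and applies Theorem~\ref{thm:fs-ind-1} in the rewritten form $\nu_{2k}(m)=\sgn(\tau)^k\,\widehat{\rho}(1)^k\,|V|^{-1/2}\sum_v\rho(v)^{-k}$, so that everything reduces to products of \emph{one-variable} Gauss sums via~(\ref{eq:gauss-sum}); the case split (a)/(b) appears simply because $\rho(v)^{-k}\equiv 1$ when $p\mid k$, making the second factor equal to $p^{r/2}$, whereas for $p\nmid k$ it is again a Gauss sum. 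Your route evaluates the sum of Theorem~\ref{thm:fs-ind-ty-2} directly: note that your hyperplane identity $2\sum_{i<j}a_ia_j=-\sum_i a_i^2$ on $\{\sum_i a_i=0\}$ is precisely the statement that the paper's $\rho$ lies in $C(\chi)$ (compare~(\ref{eq:coboundary-2})), but you must then handle a $(k-1)$-variable quadratic form with Gram matrix $I+\mathbf{1}\mathbf{1}^{T}$, whose determinant is exactly $k$; when $p\mid k$ its radical is spanned by the all-ones vector, the reduced $(k-2)$-variable form has discriminant $k-1\equiv-1\pmod p$, and the radical contributes your predicted extra $p^{r/2}$. This all works, and your determinant analysis correctly explains both the $\legendre{-k}{p}$ factor and the origin of the case split, but it costs you the multivariable theory of Gauss sums over $\mathbb{F}_p$ (well-definedness of the discriminant, treatment of degenerate forms), which the paper's choice of $\rho$ avoids entirely. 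One point you flag but must actually carry out: your raw output has exponents $k-1$ where the statement has $k+1$, and $\legendre{k}{p}^r$ where it has $\legendre{-k}{p}^r$; these reconcile because the Legendre symbols and $D(\chi)$ are signs and $\varepsilon_p^2=\legendre{-1}{p}$, so the two expressions are equal, not merely similar.
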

\begin{proof}
  We may assume $\chi$ to be diagonal. There exists $a_s \in \mathbb{F}_p^\times$ such that
  \begin{equation*}
    \chi(i_1, \cdots, i_r; j_1, \cdots, j_r) = \exp\left(\frac{2\pi\sqrt{-1}}{p} \sum_{s = 1}^r a_s i_s j_s \right)
    \quad (i_s, j_s \in \mathbb{F}_p).
  \end{equation*}
  By the multiplicativity of the Legendre symbol, we have
  \begin{equation*}
    D(\chi) = \legendre{a_1 \cdots a_r}{p} = \legendre{a_1}{p} \cdots \legendre{a_r}{p}.
  \end{equation*}
  Let $h = (p-1)/2$ so that $2h = -1$ in $\mathbb{F}_p$. Define $\rho: V \to \mathbb{C}^\times$ by
  \begin{equation*}
    \rho(i_1, \cdots, i_r) = \exp\left(\frac{2\pi h \sqrt{-1}}{p} \sum_{s = 1}^r a_s^{} i_s^2 \right)
    \quad (i_s \in \mathbb{F}_p).
  \end{equation*}

  {\rm (a)} One can easily see that $\rho \in C(\chi)$. By~(\ref{eq:gauss-sum}),
  \begin{equation*}
    \frac{1}{\sqrt{|V|}} \sum_{v \in V} \rho(v)^t
    = \prod_{s = 1}^r \left\{ \frac{1}{\sqrt{p}} \sum_{i = 0}^{p - 1} \exp\left(\frac{2\pi\sqrt{-1}}{p} a_s h t i^2\right) \right\}
    = \legendre{-2t}{p}^r D(\chi) \, \varepsilon_p^r.
  \end{equation*}
  Letting $t = 1$, we have $\widehat{\rho}(0) = \legendre{-2}{p}^r D(\chi) \varepsilon_p^r$, where $\widehat{\rho}$ is the Fourier transform of $\rho$ associated with $\chi$. Now our claim follows immediately from Theorem~\ref{thm:fs-ind-1}.
 
  {\rm (b)} This can be proved in a similar way as (a).
\end{proof}

We have $\nu_2(m) = \sgn(\tau)$ and $\nu_4(m) = \varepsilon_p^{3r} D(\chi)$. Thus $\tau$ and the isometry class of $\chi$ can be recovered from Frobenius-Schur indicators. This observation gives the following corollary:

\begin{corollary}
  \label{cor:FS-ind-fin-vec-odd}
  Let $\mathcal{C} = \TY(V, \chi, \tau)$ and $\mathcal{D} = \TY(V, \chi', \tau')$ be Tambara-Yamagami categories associated with $V$. Then the following conditions are equivalent:
  \begin{enumalph}
  \item $\mathcal{C}$ and $\mathcal{D}$ are monoidally equivalent.
  \item $\nu_n(m \in \mathcal{C}) = \nu_n(m \in \mathcal{D})$ for $n = 2, 4$.
  \item $\nu_n(\mathcal{C}) = \nu_n(\mathcal{D})$ for $n = 2, 4$.
  \end{enumalph}
\end{corollary}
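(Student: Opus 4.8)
The plan is to reduce the whole equivalence to the two numerical invariants $\sgn(\tau)$ and $D(\chi)$, which already classify $\TY(V,\chi,\tau)$ up to monoidal equivalence. Recall from the discussion following Definition~\ref{def:TY} that $\TY(V,\chi,\tau)$ and $\TY(V,\chi',\tau')$ are monoidally equivalent precisely when $(V,\chi)$ and $(V,\chi')$ are isometric and $\tau=\tau'$. Since the underlying group $V=\mathbb{F}_p^r$ is fixed and $D(\chi)$ is a complete invariant of non-degenerate symmetric bicharacters of $V$, the first condition is equivalent to $D(\chi)=D(\chi')$; and since $\tau,\tau'$ are square roots of one and the same positive real number, the second is equivalent to $\sgn(\tau)=\sgn(\tau')$. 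Thus condition (a) is exactly the equality of the pairs $(\sgn(\tau),D(\chi))$ and $(\sgn(\tau'),D(\chi'))$, and it suffices to show that each of (b) and (c) recovers this pair.

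First I would dispose of the implications (a) $\Rightarrow$ (b) and (a) $\Rightarrow$ (c). Here the key remark is that the pivotal structure on a Tambara-Yamagami category is the canonical one, characterized by the monoidal-invariant condition $\pdim_j=\FPdim$; hence any monoidal equivalence automatically respects it and is therefore pivotal. Consequently every monoidal equivalence preserves all Frobenius-Schur indicators $\nu_n(W)$, and in particular both $\nu_n(m)$ and the weighted sums $\nu_n(\mathcal{C})$, yielding (b) and (c).

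For the converses I would recover $(\sgn(\tau),D(\chi))$ explicitly. For (b) I use the values $\nu_2(m)=\sgn(\tau)$ and $\nu_4(m)=\varepsilon_p^{3r}D(\chi)$ recorded just before the corollary: because $\varepsilon_p^{3r}$ is a fixed nonzero constant depending only on the common data $p,r$, equality of $\nu_2(m)$ and of $\nu_4(m)$ for $\mathcal{C}$ and $\mathcal{D}$ forces $\sgn(\tau)=\sgn(\tau')$ and $D(\chi)=D(\chi')$, giving (b) $\Rightarrow$ (a). For (c) I expand $\nu_n(\mathcal{C})=\sum_{W\in\Irr(\mathcal{C})}\nu_n(W)\pdim(W)$; using Theorem~\ref{thm:fs-ind-0}, $\pdim(a)=1$ for $a\in V$, and $\pdim(m)=\sqrt{|V|}$, this collapses to $\nu_n(\mathcal{C})=|V[n]|+\sqrt{|V|}\,\nu_n(m)$. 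Since $p$ is odd, $2$ and $4$ are invertible modulo $p$, so $V[2]=V[4]=\{0\}$ and $|V[2]|=|V[4]|=1$; hence $\nu_2(\mathcal{C})=1+p^{r/2}\sgn(\tau)$ and $\nu_4(\mathcal{C})=1+p^{r/2}\varepsilon_p^{3r}D(\chi)$, from which $(\sgn(\tau),D(\chi))$ is read off, giving (c) $\Rightarrow$ (a).

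The argument is essentially a bookkeeping of invariants, so I do not expect a serious obstacle. The one point that deserves care is the claim used in (a) $\Rightarrow$ (b),(c), namely that a monoidal equivalence preserves Frobenius-Schur indicators; this rests on the canonical pivotal structure being monoidally invariant, which in turn follows from the pivotal dimensions agreeing with the Frobenius-Perron dimensions (Grothendieck-ring invariants). Everything else is a direct substitution into the formulae of Theorems~\ref{thm:fs-ind-0} and~\ref{thm:FS-ind-comp-1}.
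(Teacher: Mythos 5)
Your proposal is correct and follows essentially the same route as the paper: the corollary is exactly the paper's observation that $\nu_2(m)=\sgn(\tau)$ and $\nu_4(m)=\varepsilon_p^{3r}D(\chi)$ recover the pair $(\sgn(\tau),D(\chi))$, which by the Tambara--Yamagami classification and the completeness of the invariant $D$ determines the category, with the sums $\nu_n(\mathcal{C})=|V[n]|+\sqrt{|V|}\,\nu_n(m)$ handled the same way. The only difference is that you spell out the (a)~$\Rightarrow$~(b),(c) direction (invariance of indicators under monoidal equivalence via uniqueness of the canonical pivotal structure), which the paper leaves implicit.
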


Masuoka showed that a semisimple Hopf algebra of dimension $2 p^2$ with $p^2$ non-equivalent one-dimensional representations is unique up to isomorphism \cite[Theorem~1.9]{MR1370538}. Let $H_{2p^2}$ be such a Hopf algebra. By computing fusion rules, we have that $\Rep(H)$ is monoidally equivalent to $\TY(\mathbb{F}_p^2, \chi, \tau)$ for some $\chi$ and $\tau$.

Fusion rules do not give any information about $\chi$ and $\tau$. We can determine the isometry class of $\chi$ and $\tau$ by using Corollary~\ref{cor:FS-ind-fin-vec-odd}. Frobenius-Schur indicators of the regular representation of $H_{2p^2}$ have been determined by the author \cite[Theorem~5.1]{S10} as follows:
\begin{equation*}
  \nu_n(H_{2p^2}) = \gcd(n, p)^2 +
  \begin{cases}
    0                  & \text{if $n$ is odd}, \\
    p \cdot \gcd(n, p) & \text{if $n$ is even}.
  \end{cases}
\end{equation*}
On the other hand,
\begin{equation*}
  \nu_2(\TY(\mathbb{F}_p^2, \chi, \tau)) = 1 + \sgn(\tau) \cdot p \text{\quad and \quad}
  \nu_4(\TY(\mathbb{F}_p^2, \chi, \tau)) = 1 + D(\chi) \varepsilon_p^2 \cdot p.
\end{equation*}
Comparing them with $\nu_n(H)$, we conclude that $\tau = +p^{-1}$ and that $\chi$ is a non-de\-gen\-er\-ate symmetric bicharacter of $\mathbb{F}_p^2$ such that $D(\chi) = \varepsilon_p^2$.

\subsection{Characteristic two}

Let $V = \mathbb{F}_2^r$ with $e_1, \cdots, e_r$ the standard basis. If $B$ is a bilinear form on $V$, then $\chi_B^{}(v, w) = (-1)^{B(v, w)}$ ($v, w \in V$) is a bicharacter of $V$. Same as the case where $p > 2$, the assignment $B \mapsto \chi_B^{}$ gives a bijection between bilinear forms on $V$ and bicharacters of $V$.

The classification of non-degenerate symmetric bilinear forms on $V$ is well-known (see, for example, \cite{MR1501952}). If $r$ is odd, every such bilinear form is isometric to the bilinear form determined by $B_{\rm sym}(e_i, e_j) = \delta_{i j}$. If $r$ is even, there exist exactly two isometry classes of such bilinear forms. One is represented by $B_{\rm sym}$ and the another is represented by
\begin{equation*}
  B_{\rm alt}(i_1, \cdots, i_r; j_1, \cdots, j_r) =
  \sum_{s = 1}^{r/2} (i_{2s - 1} j_{2s} + i_{2s} j_{2s - 1})
  \quad (i_s, j_s \in \mathbb{F}_2).
\end{equation*}
We denote by $\chi_*^{}$ ($* = {\rm sym}, {\rm alt}$) the bicharacter of $V$ corresponding to $B_*$.

\begin{theorem}
  \label{thm:FS-ind-comp-2}
  Let $\chi$ be a non-degenerate symmetric bicharacter of $V$ and $\tau$ a square root of $|V|$. The $2k$-th Frobenius-Schur indicator of $m \in \TY(V, \chi, \tau)$ is given as follows:
  \begin{enumalph}
  \item If $\chi$ is isometric to $\chi_{\rm sym}$,
    \begin{equation*}
      \nu_{2k}(m) = \sgn(\tau)^k
      \left(\frac{1 + \sqrt{-1}}{\sqrt{2}}\right)^{r k}
      \left(\frac{1 + (\sqrt{-1})^k}{\sqrt{2}}\right)^r
    \end{equation*}
  \item If $r$ is even and $\chi$ is isometric to $\chi_{\rm alt}$,
    \begin{equation*}
      \nu_{2k}(m) =
      \begin{cases}
        \sgn(\tau)       & \text{if $k$ is odd}, \\
        2^{\frac{1}{2}r} & \text{if $k$ is even}.
      \end{cases}
    \end{equation*}
  \end{enumalph}
\end{theorem}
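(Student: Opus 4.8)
The plan is to reduce everything to the two standard forms and then to exploit the orthogonal decomposition of $V$ together with the convolution form of the indicator. Since $\nu_{2k}(m)$ depends only on the isometry class of $(V,\chi)$ — indeed $\nu_{2k}(m) = \sgn(\tau)^k\sqrt{|V[k]|}\,\Xi_k(V,\chi)$ with $\Xi_k$ an isometry invariant — I may assume $\chi = \chi_{\rm sym}$, or $\chi = \chi_{\rm alt}$ when $r$ is even. The computational engine is the $\rho$-form of Theorem~\ref{thm:fs-ind-1}: fixing $\rho \in C(\chi)$ and using~\eqref{eq:fourier-cochain}, and writing $0$ for the identity of $V$,
\begin{equation*}
  \nu_{2k}(m) = \frac{\sgn(\tau)^k\,\widehat{\rho}(0)^k}{\sqrt{|V|}} \sum_{a \in V} \rho(a)^{-k}.
\end{equation*}
Once $\rho$ is taken as a tensor product of cochains on the blocks of an orthogonal decomposition, both $\widehat{\rho}(0)$ and the sum $\sum_a \rho(a)^{-k}$ are multiplicative over that decomposition, so it suffices to evaluate a single local factor in each case and raise it to the appropriate power.

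For the symmetric case I would decompose $(V,\chi_{\rm sym})$ as an orthogonal sum of $r$ copies of $(\mathbb{F}_2,\chi_1)$ with $\chi_1(1,1) = -1$, and take $\rho = \rho_1^{\otimes r}$ where $\rho_1(0) = 1$ and $\rho_1(1)$ is a primitive fourth root of unity, forced by $\rho_1(1)^2 = \chi_1(1,1) = -1$. (This is exactly the phenomenon, noted in Section~\ref{sec:preliminaries}, that $a^2 = 1$ need not give $\rho(a)^2 = 1$.) A two-term computation then yields the local factors $\widehat{\rho_1}(0) = (1 + \sqrt{-1})/\sqrt{2}$ and $\sum_{x \in \mathbb{F}_2}\rho_1(x)^{-k} = 1 + (\sqrt{-1})^{-k}$; grouping the single factor as $\bigl((1+\sqrt{-1})/\sqrt{2}\bigr)^{k}\cdot\bigl(1 + (\sqrt{-1})^{-k}\bigr)/\sqrt{2}$ and taking $r$-th powers collects the answer into the advertised product of two $r$-th powers.

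For the alternating case (with $r$ even) I would use that $\chi_{\rm alt}$ is genuinely alternating, so that on each hyperbolic plane $\mathbb{F}_2^2$ one may take the $\{\pm1\}$-valued cochain $\rho(a_1,a_2) = (-1)^{a_1 a_2}$, which a short check shows lies in $C(\chi_{\rm alt})$ restricted to that plane. Because $\rho$ is now real, $\rho(a)^{-k}$ depends only on the parity of $k$; the per-plane data $\widehat{\rho}(0) = 1$ together with $\sum_a \rho(a)^{-k} \in \{4,\,2\}$ (according as $k$ is even or odd) produce, after the normalization by $\sqrt{|V|} = 2^{r/2}$, the clean dichotomy $\nu_{2k}(m) = 2^{r/2}$ for $k$ even and $\nu_{2k}(m) = \sgn(\tau)$ for $k$ odd.

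The main obstacle I anticipate is bookkeeping rather than conceptual: in the symmetric case $\rho$ is forced to take values in $\mu_4$ rather than in $\{\pm1\}$, so one must track carefully the powers of $\sqrt{-1}$ coming from $\widehat{\rho}(0)^k$ and from $\sum_a \rho(a)^{-k}$ to land on the stated closed form. The ubiquitous $\sqrt{2}$ in the denominators is precisely the characteristic-two quadratic Gauss sum $\tfrac{1}{\sqrt{2}}(1+\sqrt{-1})$, and pinning down its interaction with the exponent $k$ is the delicate point; I would verify the conventions against the forced values $\nu_2(m) = \sgn(\tau)$ at $k=1$ and $\nu_4(m) = 0$ at $k=2$ (the latter because the local factor $1 + (\sqrt{-1})^{\pm 2}$ vanishes), which serve as robust consistency checks.
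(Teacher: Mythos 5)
Your method is exactly the paper's: the paper takes $\rho(v) = (\sqrt{-1})^{q(v)}$ with $q$ the Hamming weight (which is precisely your $\rho_1^{\otimes r}$) in case (a), and $\rho(v) = (-1)^{\sum_s i_{2s-1} i_{2s}}$ (your per-plane cochain, tensored over the hyperbolic planes) in case (b), and then evaluates $\nu_{2k}(m) = \sgn(\tau)^k\,\widehat{\rho}(0)^k\,|V|^{-1/2}\sum_{a}\rho(a)^{-k}$ via the binomial theorem, exactly as you propose. Your part (b) is correct as written.

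In part (a), however, your final step fails. What your computation actually produces is
\begin{equation*}
  \nu_{2k}(m) = \sgn(\tau)^k \left(\frac{1+\sqrt{-1}}{\sqrt{2}}\right)^{rk}
  \left(\frac{1+(\sqrt{-1})^{-k}}{\sqrt{2}}\right)^{r},
\end{equation*}
and this does not ``collect into the advertised product'': since $1+(\sqrt{-1})^{-k} = (\sqrt{-1})^{-k}\bigl(1+(\sqrt{-1})^{k}\bigr)$, your expression differs from the displayed formula by the factor $(\sqrt{-1})^{rk}$, which is not $1$ when $k$ is odd and $rk \not\equiv 0 \pmod{4}$. In fact your expression is the correct value and the printed statement is the one at fault: at $k=1$ the displayed formula gives $\nu_2(m) = \sgn(\tau)(\sqrt{-1})^{r}$, contradicting $\nu_2(m) = \sgn(\tau)$ (noted after Theorem~\ref{thm:fs-ind-ty-2}), and for $r=2$ it gives $\nu_2 = -\sgn(\tau)$ and $\nu_6 = -\sgn(\tau)$, contradicting Table~\ref{tab:fs-ind-m}. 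The paper's own proof exhibits the same mismatch: it substitutes $t=-k$ into $\bigl((1+(\sqrt{-1})^{t})/\sqrt{2}\bigr)^{r}$ and therefore derives your formula, not the one displayed in the theorem. So the missing ingredient in your write-up is precisely the consistency check you yourself proposed but did not carry out against the target: the $k=1$ test would have revealed that the statement must be corrected to have $(\sqrt{-1})^{-k}$ in the second factor (equivalently, $\bigl((1-\sqrt{-1})/\sqrt{2}\bigr)^{rk}$ in the first), and that your argument proves that corrected statement rather than the printed one.
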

\begin{proof}
  {\rm (a)} We may assume that $\chi = \chi_{\rm sym}$. Define $q: V \to \mathbb{Z}_{\ge 0}$ by
  \begin{equation*}
    q(i_1, \cdots, i_r) = \# \{ s = 1, \cdots, r \mid i_s = 1 \} \quad (i_s \in \mathbb{F}_2).
  \end{equation*}
  The function $\rho: V \to \mathbb{C}^\times$ given by $\rho(v) = (\sqrt{-1})^{q(v)}$ is in $C(\chi)$. For $t \in \mathbb{Z}$,
  \begin{equation*}
    \frac{1}{\sqrt{|V|}} \sum_{v \in V} \rho(v)^t
    = \frac{1}{\sqrt{|V|}} \sum_{s = 0}^{\infty} \# (q^{-1}(s)) \cdot (\sqrt{-1})^{s t} \\
    = \left(\frac{1 + (\sqrt{-1})^t}{\sqrt{2}}\right)^r
  \end{equation*}
  by the binomial theorem. The rest of the proof is same as that of Theorem~\ref{thm:FS-ind-comp-1}.

  {\rm (b)} We may assume that $\chi = \chi_{\rm alt}$. Define $q: V \to \mathbb{F}_2$ by
  \begin{equation*}
    q(i_1, \cdots, i_r) = \sum_{s = 1}^{r/2} i_{2s-1} i_{2s} \quad (i_s \in \mathbb{F}_2).
  \end{equation*}
  The function $\rho: V \to \mathbb{C}^\times$ given by $\rho(v) = (-1)^{q(v)}$ is in $C(\chi)$. In a similar way as the proof of Lemma~\ref{lem:fiber-functor}, we compute as follows:
  \begin{equation*}
    \frac{1}{\sqrt{|V|}} \sum_{v \in V} \rho(v)^t =
    \begin{cases}
      1                & \text{if $t$ is odd}, \\
      2^{\frac{1}{2}r} & \text{if $t$ is even}.
    \end{cases}
  \end{equation*}
  The rest of the proof is same as that of Theorem~\ref{thm:FS-ind-comp-1}.
\end{proof}

Similarly to Corollary~\ref{cor:FS-ind-fin-vec-odd}, we have the following:

\begin{corollary}
  \label{cor:FS-ind-fin-vec-even}
  Let $\mathcal{C} = \TY(V, \chi, \tau)$ and $\mathcal{D} = \TY(V, \chi', \tau')$ be Tambara-Yamagami categories associated with $V$. Then the following conditions are equivalent:
  \begin{enumalph}
  \item $\mathcal{C}$ and $\mathcal{D}$ are monoidally equivalent.
  \item $\nu_n(m \in \mathcal{C}) = \nu_n(m \in \mathcal{D})$ for $n = 2, 4$.
  \item $\nu_n(\mathcal{C}) = \nu_n(\mathcal{D})$ for $n = 2, 4$.
  \end{enumalph}
\end{corollary}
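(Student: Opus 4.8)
The plan is to establish the pairwise equivalences, treating (b)$\Leftrightarrow$(c) as a formal reduction, (a)$\Rightarrow$(b) as an invariance statement, and (b)$\Rightarrow$(a) as the substantive step. For the first, observe that exactly as in the odd case (Corollary~\ref{cor:FS-ind-fin-vec-odd}) the definition of $\nu_n(\mathcal{C})$ together with $\pdim(a)=1$ for $a\in V$, $\pdim(m)=\sqrt{|V|}$, and Theorem~\ref{thm:fs-ind-0} gives
\[
  \nu_n(\mathcal{C}) = |V[n]| + \nu_n(m)\sqrt{|V|}.
\]
Since $\mathcal{C}$ and $\mathcal{D}$ share the same underlying group $V=\mathbb{F}_2^r$, both $|V[n]|$ and $\sqrt{|V|}$ are common to the two categories, so $\nu_n(\mathcal{C})=\nu_n(\mathcal{D})$ holds if and only if $\nu_n(m\in\mathcal{C})=\nu_n(m\in\mathcal{D})$; taking $n=2,4$ yields (b)$\Leftrightarrow$(c).

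For (a)$\Rightarrow$(b): by the classification recalled after Definition~\ref{def:TY}, a monoidal equivalence $\mathcal{C}\simeq\mathcal{D}$ forces $(\mathbb{F}_2^r,\chi)$ and $(\mathbb{F}_2^r,\chi')$ to be isometric and $\tau=\tau'$. The right-hand sides of the formulae in Theorem~\ref{thm:FS-ind-comp-2} depend only on the isometry class of $\chi$ and on $\sgn(\tau)$, so the indicators $\nu_n(m)$ agree and (b) holds.

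The essential direction is (b)$\Rightarrow$(a). First, $\nu_2(m)=\sgn(\tau)$ by the remark following Theorem~\ref{thm:fs-ind-ty-2}; since $\tau$ and $\tau'$ are prescribed square roots of the same positive real number, the equality $\nu_2(m\in\mathcal{C})=\nu_2(m\in\mathcal{D})$ recovers $\sgn(\tau)=\sgn(\tau')$ and hence $\tau=\tau'$. Next, setting $k=2$ in Theorem~\ref{thm:FS-ind-comp-2} I find $\nu_4(m)=0$ when $\chi$ is isometric to $\chi_{\rm sym}$ (the factor $1+(\sqrt{-1})^2=0$ annihilates the product) and $\nu_4(m)=2^{r/2}\neq 0$ when $\chi$ is isometric to $\chi_{\rm alt}$. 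As $\{\chi_{\rm sym},\chi_{\rm alt}\}$ is a complete list of isometry classes of non-degenerate symmetric bicharacters of $V$ (with $\chi_{\rm alt}$ occurring only for even $r$), the value $\nu_4(m)$ is a complete isometry invariant of $\chi$. Thus $\nu_4(m\in\mathcal{C})=\nu_4(m\in\mathcal{D})$ places $\chi$ and $\chi'$ in the same class, and combined with $\tau=\tau'$ the classification yields (a).

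The only point requiring attention—and it is mild—is the dependence on the parity of $r$: for odd $r$ there is a single isometry class, so $\nu_4$ carries no discriminating information and only $\nu_2$ is needed, whereas for even $r$ one must note that $2^{r/2}$ is genuinely nonzero in order for $\nu_4(m)$ to separate $\chi_{\rm sym}$ from $\chi_{\rm alt}$. Both observations are immediate from Theorem~\ref{thm:FS-ind-comp-2}, completing the proof.
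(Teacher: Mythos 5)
Your proof is correct and takes essentially the same route as the paper: the paper's (implicit) argument for this corollary is exactly to note that $\nu_2(m)=\sgn(\tau)$ recovers $\tau$ and that $\nu_4(m)$ (equal to $0$ for $\chi_{\rm sym}$ and $2^{r/2}$ for $\chi_{\rm alt}$, by Theorem~\ref{thm:FS-ind-comp-2} with $k=2$) is a complete isometry invariant of $\chi$, and then to invoke the classification of Tambara-Yamagami categories up to monoidal equivalence. Your reduction of (c) to (b) via $\nu_n(\mathcal{C})=|V[n]|+\nu_n(m)\sqrt{|V|}$ is likewise the paper's, so nothing further is needed.
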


\begin{table}
  \begin{tabular}{cc|cccccccc}
    $\chi$           & $\tau$ & $\nu_1$ & $\nu_2$ & $\nu_3$ & $\nu_4$ & $\nu_5$ & $\nu_6$ & $\nu_7$ & $\nu_8$ \\
    \hline
    $\chi_{\rm alt}$ & $+1/2$ & $0$ & $+1$ & $0$ & $2$ & $0$ & $+1$ & $0$ & $2$ \\
    $\chi_{\rm alt}$ & $-1/2$ & $0$ & $-1$ & $0$ & $2$ & $0$ & $-1$ & $0$ & $2$ \\
    $\chi_{\rm sym}$ & $+1/2$ & $0$ & $+1$ & $0$ & $0$ & $0$ & $+1$ & $0$ & $2$ \\
    $\chi_{\rm sym}$ & $-1/2$ & $0$ & $-1$ & $0$ & $0$ & $0$ & $-1$ & $0$ & $2$ \\
  \end{tabular}
  \caption{Frobenius-Schur indicators $\nu_n$ ($n = 1, \cdots, 8$) of $m \in \TY(\mathbb{F}_2^2, \chi, \tau)$}
  \label{tab:fs-ind-m}
\end{table}

Letting $r = 2$ in Theorem~\ref{thm:FS-ind-comp-2}, we obtain Table~\ref{tab:fs-ind-m}. This table has been obtained by Ng and Schauenburg in \cite{MR2366965} by using central gauge invariants of quasi-Hopf algebras.

It is known that a non-trivial semisimple Hopf algebra of dimension eight is unique up to isomorphism. Let $\mathcal{B}_8$ be such a Hopf algebra. By computing fusion rules, we have that $\Rep(\mathcal{B}_8)$ is monoidally equivalent to $\TY(\mathbb{F}_2^2, \chi, \tau)$ for some $\chi$ and $\tau$. The isometry class of $\chi$ and $\tau$ can be determined by a similar method used in the previous subsection. As computed by the author \cite{S10},
\begin{equation*}
  \nu_n(\mathcal{B}_8) =
  \begin{cases}
    1 & \text{if $n \equiv 1, 3, 5, 7$}, \\
    6 & \text{if $n \equiv 2$}, \\
    4 & \text{if $n \equiv 4$}, \\
    8 & \text{if $n \equiv 8 \pmod{8}$}.
  \end{cases}
\end{equation*}
Thus, by the previous corollary, we have that $\Rep(\mathcal{B}_8) \approx \TY(\mathbb{F}_2^2, \chi_{\rm sym}, +1/2)$ as monoidal categories. This result has been showed by Tambara and Yamagami in \cite{MR1659954} by direct computation.

Of course, the same method can be applicable for group algebras of $D_8$, the dihedral group of order eight, and that of $Q_8$, the quaternion group of the same order. As showed in \cite{MR1659954},
\begin{equation*}
  \Rep(\mathbb{C}D_8) \approx \TY(\mathbb{F}_2^2, \chi_{\rm alt}, +1/2) \text{\quad and \quad}
  \Rep(\mathbb{C}Q_8) \approx \TY(\mathbb{F}_2^2, \chi_{\rm alt}, -1/2)
\end{equation*}
as monoidal categories.

\section*{Acknowledgements}

The author is supported by Grant-in-Aid for JSPS Fellows.

\bibliographystyle{abbrv}

\end{document}